\documentclass[1p]{elsarticle}
\usepackage{amsmath, amssymb,amsthm,stmaryrd}
\usepackage{graphicx,mdwtab}


\newtheorem{theorem}{Theorem}

\newtheorem{lemma}[theorem]{Lemma}

\newtheorem{proposition}[theorem]{Proposition}
\newtheorem{corollary}[theorem]{Corollary}
\newtheorem{problem}{Problem}
\theoremstyle{definition}

\theoremstyle{remark}

\newtheorem{conjecture}{Conjecture}

\newcommand{\shm}{\,\triangledown\,}
\newcommand{\shtm}{\,\widetilde{\triangledown}\,}

\newcommand{\mtrdens}[1]{\,\widetilde{\nabla}\kern-2.8pt\raisebox{1.6pt}{$\scriptstyle /$}_{#1}}
\newcommand{\bbbn}{\mathbb{N}}

 \newcommand{\GRA}{{\mathcal Graph}}
 
 \newlength{\graphshift}
 \newcommand{\nlongrightarrow}{\relbar\!\joinrel\not\relbar\joinrel\!\!\rightarrow}
\newcommand{\duality}[3][1cm]{\setlength{\graphshift}{#1}\raisebox{-.4\graphshift}{\includegraphics[height=\graphshift]{#2}}\quad\nlongrightarrow\quad
G\qquad\iff\qquad  G\quad\longrightarrow \quad\raisebox{-.4\graphshift}{\includegraphics[height=\graphshift]{#3}}}

\begin{document}
\begin{frontmatter}
\journal{special issue}
\title{On First-Order Definable Colorings}
\author{J.~Ne\v set\v ril\fnref{fn1}}
\address{Computer Science Institute of Charles University (IUUK)\\
 and Institute of Theoretical Computer Science (ITI)\\
  Malostransk\' e n\' am.25, 11800 Praha 1, Czech Republic}
 \ead{nesetril@iuuk.mff.cuni.cz}
\fntext[fn1]{Supported by grant ERCCZ LL-1201, 
  CE-ITI P202/12/6061,  and by the European Associated Laboratory ``Structures in
Combinatorics'' (LEA STRUCO)}
\author{P.~Ossona de Mendez\corref{cor1}\fnref{fn2}}
\address{
Centre d'Analyse et de Math\'ematiques Sociales (CNRS, UMR 8557)\\
  190-198 avenue de France, 75013 Paris, France\\
  	--- and ---\\
Computer Science Institute of Charles University (IUUK)\\
   Malostransk\' e n\' am.25, 11800 Praha 1, Czech Republic
  }
 \ead{pom@ehess.fr}
\cortext[cor1]{Corresponding author}
\fntext[fn2]{Supported by grant ERCCZ LL-1201,  by the European Associated Laboratory ``Structures in
Combinatorics'' (LEA STRUCO), 
and partially supported by ANR project Stint under reference ANR-13-BS02-0007}

\begin{abstract}
We address the problem of characterizing $H$-coloring problems that are first-order definable on a fixed class of relational structures. In this context, we give also several characterizations of a
homomorphism dualities arising in a class of structure.
\end{abstract}

\begin{keyword}
graph \sep coloring \sep first-order logic \sep bounded expansion class
\MSC{05C15} {coloring of graphs and hypergraphs}
\end{keyword}
\end{frontmatter}
\section{Introduction}
Recall that {\em classical model theory} studies properties of abstract
mathematical structures (finite or not) expressible in first-order logic \cite{Hodges1993}, and
{\em finite model theory} is the study of first-order logic (and its
various extensions) on finite structures \cite{Ebbinghaus1996},
\cite{Libkin2004}.

{\em Constraint Satisfaction Problems} (CSPs), and more specifically $H$-coloring problems, are standard examples of 
problems which can be expressed in monadic second order logic but usually not in the first-order logic. Of course, expressing a $H$-coloring problem in first-order logic
would be highly appreciable, as it would allow fast checking (in at most polynomial time) although problems expressed in monadic second order logic are usually NP-complete.
In this direction, it has been proved by Hell and Ne{\v s}et{\v r}il
\cite{hellne90} that in the context of finite undirected graphs the $H$-coloring
problem is NP-complete unless $H$ is bipartite, in which case the $H$-coloring
problem is clearly polynomially solvable. This, and a similar dichotomy result of Schaefer~\cite{Schaefer1978},
led Feder and Vardi \cite{Feder1993,0914.68075} to 
formulate the celebrated Dichotomy Conjecture which asserts that, for every
constraint language over an arbitrary finite domain, the corresponding constraint
satisfaction problems are either solvable in polynomial time, or are NP-complete. It was soon noticed
that this conjecture is equivalent to the existence of a dichotomy for (general)
$H$-coloring problems, and in fact it suffices to prove it for oriented graphs (see \cite{0914.68075} and \cite{HN}).

Alternatively, the class P of all polynomially solvable problems can be described as the class of problems expressible (on ordered structures) in first-order logic with a least fixed point operator \cite{Vardi1982,Immerman1982}.
On the other hand the class NP may be characterized (up to polynomial equivalence)  as the class of all problems which have a lift (or expansion) determined by forbidden homomorphisms from a finite set \cite{KN}.
Hence, we are led naturally to the question of descriptive complexity of classes of structures corresponding to $H$-coloring problems. A particular case is the question whether a $H$-coloring problem
 may be expressed in first-order logic or not.

In this paper, we will consider the relativized version of the problem of
first-order definability of $H$-coloring problems to graphs (or structures)
belonging to a fixed class $\mathcal C$:
\begin{problem}
\label{pb:1}
Given a fixed class $\mathcal C$ of graphs (directed graphs, relational
structures), determine which $H$-coloring problems are first-order
definable in $\mathcal C$. Explicitly,  determine for which graphs (directed graphs,
relational structures) $H$  there exists a first-order sentence
$\phi_H$ such that
$$
\forall G\in\mathcal C:\qquad (G\models\phi_H)\iff(G\rightarrow H).
$$
\end{problem}
The case where $\mathcal C$ is the whole class of all finite graphs (all
 finite directed graphs, all finite relational structures with given finite
 signature) is well understood. Atserias~\cite{Atserias2005a,Atserias2008} 
 and Rossman~\cite{Rossman2007}
 proved that in this case first-order definable 
 $H$-colorings correspond exactly to {\em finite homomorphism dualities}, and
 these dualities have been fully characterized (for
 undirected graphs, by Ne{\v s}et{\v r}il and Pultr \cite{NPULTR};
 for directed graphs, by Kom{\' a}rek \cite{Kom'arek1988}; 
 for general finite structures, by Ne{\v s}et{\v r}il and Tarif \cite{NT}) as follows:
 
 \begin{theorem}[\cite{NT}]
For any signature $\sigma$ and any finite set $\mathcal F$ of
$\sigma$-structures the following two statements are equivalent:
\begin{enumerate}
  \item There exists $D$ such that $\mathcal F$ and $D$ form a finite duality,
  that is:
  $$\forall \text{ finite }G:\qquad (\forall F\in\mathcal F,\ F\nrightarrow
  G)\quad\iff\quad (G\rightarrow D)$$
  
\item $\mathcal F$ is homomorphically equivalent to a set of finite (relational)
trees.
\end{enumerate}
 \end{theorem}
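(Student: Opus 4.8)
The plan is to prove the two implications separately; each rests on a construction, and I expect the direction $(2)\Rightarrow(1)$ to carry the real work. Recall that a (relational) \emph{tree} is a $\sigma$-structure whose \emph{incidence graph} --- the bipartite graph with one vertex per element, one vertex per tuple occurring in some relation, and an edge joining a tuple to each of its coordinates --- is connected and acyclic (and a \emph{forest} if its incidence graph is merely acyclic). Two families of structures are homomorphically equivalent when they define the same class $\{G:\forall F\in\mathcal F,\ F\nrightarrow G\}$ of forbidden structures. I will use two standard inputs. First, a finite duality $(\mathcal F,D)$ may be replaced by its \emph{minimal obstruction set} $\mathcal F^{*}$, the cores of the $\rightarrow$-minimal structures not mapping to $D$: this $\mathcal F^{*}$ is finite, homomorphically equivalent to $\mathcal F$, a $\rightarrow$-antichain, and --- because $\{G:G\rightarrow D\}$ is closed under disjoint union --- consists of connected structures. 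Second, the \emph{Sparse Incomparability Lemma} for relational structures: for every finite $A$ and integers $\ell,k$ there is a finite $B$ of incidence girth exceeding $\ell$ with $B\rightarrow A$ and with $B\rightarrow C\iff A\rightarrow C$ for every structure $C$ on at most $k$ elements.

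\textbf{$(1)\Rightarrow(2)$.} Assume $(\mathcal F,D)$ is a finite duality and pass to $\mathcal F^{*}$. The crux is: \emph{if a connected core $F$ is homomorphically equivalent to some structure $G$ whose incidence girth exceeds the number of edges of the incidence graph of $F$, then $F$ is a tree.} To see it (assuming, as we may, that $F$ is not a single isolated element, which is trivial), a homomorphism $h\colon F\rightarrow G$ induces a graph homomorphism between incidence graphs whose image is a connected subgraph of the incidence graph of $G$ with fewer edges than that graph's girth, hence acyclic, hence a tree; moreover this image is exactly the incidence graph of the substructure $T'$ of $G$ whose universe is the image of $h$ and whose tuples are the $h$-images of the tuples of $F$. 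Thus $T'$ is a tree and $h$ factors as $F\rightarrow T'\hookrightarrow G$; composing with some $G\rightarrow F$ (which exists by homomorphic equivalence) yields $T'\rightarrow F$, so $F$ and $T'$ are homomorphically equivalent, and since $F$ is a core the map $F\rightarrow T'$ is injective (its composite with $T'\rightarrow F$ is an endomorphism of the core $F$, hence an automorphism); therefore the incidence graph of $F$ embeds into the tree that is the incidence graph of $T'$, so $F$ is a forest and, being connected, a tree. Now fix $F_{0}\in\mathcal F^{*}$ and apply the Sparse Incomparability Lemma with $A=F_{0}$, with $\ell$ larger than the number of edges of the incidence graph of every member of $\mathcal F^{*}$, and with $k=|D|$, obtaining $G_{0}$. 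Taking $C=D$ in $B\rightarrow C\iff A\rightarrow C$ and using $F_{0}\nrightarrow D$ (as $F_{0}$ is an obstruction) gives $G_{0}\nrightarrow D$, whence by the duality some $F\in\mathcal F^{*}$ has $F\rightarrow G_{0}$; since $G_{0}\rightarrow F_{0}$ we get $F\rightarrow F_{0}$, so $F=F_{0}$ (antichain of cores), hence $F_{0}\rightarrow G_{0}$ and $F_{0},G_{0}$ are homomorphically equivalent. The displayed fact then shows $F_{0}$ is a tree. As $F_{0}$ was arbitrary, $\mathcal F^{*}$ --- and hence $\mathcal F$ --- is homomorphically equivalent to a set of finite trees.

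\textbf{$(2)\Rightarrow(1)$.} Since homomorphically equivalent families have the same duals, we may assume $\mathcal F=\{T_{1},\dots,T_{m}\}$ is a finite set of trees. If each single tree $T_{i}$ has a dual $D_{i}$ --- i.e.\ $T_{i}\nrightarrow D_{i}$ and $T_{i}\nrightarrow G\Rightarrow G\rightarrow D_{i}$ for all finite $G$, which jointly give $G\rightarrow D_{i}\iff T_{i}\nrightarrow G$ --- then the categorical product $D=D_{1}\times\dots\times D_{m}$ is a dual of $\mathcal F$, since $G\rightarrow D\iff(\forall i)\,G\rightarrow D_{i}\iff(\forall i)\,T_{i}\nrightarrow G$, which is exactly the duality condition. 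So it remains to construct a finite dual $D_{T}$ of a single finite tree $T$. I would do this by induction on $T$, regarding its incidence tree as built from isolated elements by repeatedly attaching, through one new tuple, already-constructed rooted pieces beneath a root, and realising each such attachment on the dual side by a categorical product together with the relational exponential $C\mapsto C^{B}$ (the right adjoint of $(-)\times B$, which exists for $\sigma$-structures). One then verifies, via the exponential adjunction, that every $T$-free structure maps to $D_{T}$ --- a homomorphism out of the structure being traded for the absence of a homomorphism from the relevant subtree into it --- and that $T\nrightarrow D_{T}$; the latter is where acyclicity of the incidence graph of $T$ is essential, since a hypothetical homomorphism $T\rightarrow D_{T}$ is forced through the adjunctions to close a cycle of that incidence graph, of which there are none.

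\textbf{The main obstacle.} The genuinely technical point is $(2)\Rightarrow(1)$: arranging the induction so that the product/exponential recursion for $D_{T}$ mirrors the decomposition of $T$ exactly, and converting the slogan ``$T\nrightarrow D_{T}$ because $T$ has no cycle'' into an actual argument. Granting the Sparse Incomparability Lemma, $(1)\Rightarrow(2)$ is comparatively short, the only delicate point there being the incidence-girth bookkeeping.
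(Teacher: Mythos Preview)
The paper does not give its own proof of this theorem: it is quoted as a known result from \cite{NT} in the introduction, with no proof or sketch supplied, so there is nothing in the paper to compare your argument against.

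That said, your proposal is essentially the Ne\v{s}et\v{r}il--Tardif argument. For $(1)\Rightarrow(2)$ you invoke the Sparse Incomparability Lemma to replace each minimal obstruction by a homomorphically equivalent structure of large incidence girth, and then observe that a connected core homomorphically equivalent to such a structure must itself be a tree; this is exactly the mechanism used in the original paper, and your incidence-graph bookkeeping (image has at most as many edges as the domain, hence fewer than the girth, hence acyclic; injectivity of $F\rightarrow T'$ from $F$ being a core) is correct. For $(2)\Rightarrow(1)$ you reduce to a single tree via products and then gesture at the recursive exponential construction of the dual; this is again the standard route, and you are right that the verification of $T\nrightarrow D_T$ via the adjunction is where the acyclicity of $T$ is actually consumed. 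The only part left genuinely unwritten is that inductive construction, which is the substance of the cited paper rather than of this one.
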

Note that an example of such a duality for the class of all finite directed graphs
 is the Gallai-Hasse-Roy-Vitaver theorem
 \cite{Gallai1968,Hasse1964,Roy1967,Vitaver1962}, which states that for every
 directed graph $\vec{G}$ it holds:
\begin{equation*}
\vec{P}_{k+1}\nrightarrow\vec{G}\qquad\iff\qquad \vec{G}\rightarrow\vec{T}_k.
\end{equation*}

For general classes of graphs the answer is more complicated . 
For instance, let $\mathcal C$ be the class of (undirected) toroidal graphs
and let $\phi$ be the sentence 

$$
\forall x_0\,\forall x_1\,\dots\forall x_{10}\quad\bigvee_{i=0}^{10}
\neg(x_i\sim x_{i+1})\vee \neg(x_i\sim x_{i+2})\vee \neg(x_i\sim x_{i+3}),$$

where additions are considered modulo $11$ and where $u\sim v$ denotes
 that $u$ and $v$ are adjacent. Then, it follows from \cite{Thomassen199411}
 that a graph $G\in\mathcal C$ satisfies $\phi$ if and only if it is
 $5$-colourable. This property can be alternatively be expressed by the
 following {\em restricted duality\/}:
  
 $$
 \forall G\in\mathcal C:\qquad
 \duality[15mm]{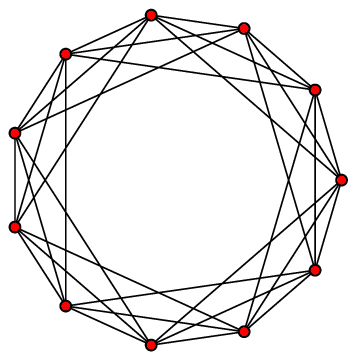}{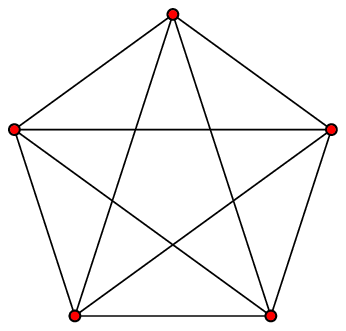}
 $$
 
In fact the class of toroidal graphs has all restricted dualities in the following sense: for every connected $F$ there exists $H_F$ such that $F\nrightarrow H_F$ and for every toroidal graph $G$ holds
$$F\nlongrightarrow G\qquad\iff\qquad G\longrightarrow H_F.$$ 
 
For a general class of graphs $\mathcal C$, Problem~\ref{pb:1} is very complex. We have to specialize.
Hence we first require that the studied class $\mathcal C$ has some
basic properties: we assume that
\begin{itemize}
\item $\mathcal C$ is {\em hereditary} (meaning that every induced subgraph of a graph in $\mathcal C$ is in $\mathcal C$);
\item $\mathcal C$ is {\em addable} (meaning that disjoint unions of graphs in $\mathcal C$ are in $\mathcal C$);
\item $\mathcal C$ is {\em topologically closed} (meaning that every subdivision of a graph in $\mathcal C$ is in $\mathcal C$).
\end{itemize}
We approach the problem of characterizing first-order definable colorings by 
first discriminating between the cases of sparse and dense classes of graphs 
using our {\em class taxonomy}~\cite{ECM2009,Nevsetvril2010a,Sparsity}, which seems relevant here.
 The second central ingredient of our study is the notion of {\em homomorphism preservation theorem} (HPT) for a class $\mathcal C$, which was investigated in \cite{Atserias2006,Dawar,Dawar2010, Sparsity,Rossman2007}.
 Our approach can be outlined as follows:

If there exists
some integer $p$ such that every $p$-subdivision of a complete graph appears as a subgraph of some graph in $\mathcal C$ (meaning that $\mathcal C$ is {\em somewhere dense}) then we prove that every first-order definable $H$-coloring defines a restricted duality on the subclass $\mathcal C'\subseteq\mathcal C$ of the $p$-subdivisions of simple graphs (follows from HPT for $\mathcal C'$). Using a classical construction of Erd\H os~\cite{ErdH1959}, we deduce that if a $H$-coloring problem is first-order definable on $\mathcal C$ then either $H$ is bipartite, or the odd-girth of $H$ is at most $2p+1$.

Otherwise (meaning that $\mathcal C$ is {\em nowhere dense}) it follows from HPT for nowhere dense classes that
every first-order definable $H$-coloring defines a restricted duality on $\mathcal C$.
 In the case where
there exists
some integer $p$ such that $p$-subdivisions of graphs with arbitrarily large average degree appear as subgraphs of graphs in $\mathcal C$ (meaning that $\mathcal C$ is not a {\em bounded expansion} class)  we prove that $H$ cannot be a restricted dual of a non-bipartite graph with arbitrarily large odd-girth. Modulo some reasonable conjecture (Conjecture~\ref{conj:nd}), we get that (as in the somewhere dense case) $H$ is either bipartite or has bounded odd-girth.

In the reminding case (where $\mathcal C$ is a bounded expansion class), 
we prove that first-order definable coloring correspond to restricted dualities, hence are exactly defined by a sentence expressing that no homomorphism exists from one of the connected graphs belonging to a finite set.

This study naturally leads to the following conjecture:
\begin{conjecture}
\label{conj:main}
Let $\mathcal C$ be a hereditary addable topologically closed class of graphs.
The following properties are equivalent:
\begin{enumerate}
\item   for every integer $p$ there is a non-bipartite graph $H_p$ of odd-girth strictly greater than $2p+1$ and a first order definable class $\mathcal D_p$ such that
a graph $G\in\mathcal{C}$ is $H_p$-colorable if and only if $G\in\mathcal D_p$.
Explicitly, there exists a
formula $\Phi_p$ such that for every graph $G\in\mathcal C$ holds
$$(G\vDash \Phi_p)\quad\iff\quad (G\rightarrow H_p);$$
\item the class $\mathcal C$ has bounded expansion.
\end{enumerate}
\end{conjecture}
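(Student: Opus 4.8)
The plan is to establish the two implications separately: $(2)\Rightarrow(1)$ unconditionally, and $(1)\Rightarrow(2)$ by contraposition along the dense/sparse taxonomy, the nowhere dense case resting on Conjecture~\ref{conj:nd}. The role of the structural hypotheses on $\mathcal C$ is that topological closure produces long odd cycles and refines subdivision witnesses, while heredity and addability are what the various homomorphism preservation theorems need.

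\emph{Direction $(2)\Rightarrow(1)$.} Assume $\mathcal C$ has bounded expansion. If $\mathcal C$ contains no cycle then, being topologically closed, it is a class of forests (a chordless cycle in a member, subdivided on one edge, would produce an odd cycle); each member is $2$‑colourable, hence maps to $C_{2p+3}$, so taking $H_p=C_{2p+3}$ and a tautology for $\Phi_p$ gives $(1)$. Otherwise $\mathcal C$ contains a cycle, so by topological closure there is an odd integer $g'$ with $C_m\in\mathcal C$ for every $m\ge g'$. Given $p$, let $N$ be the least odd integer with $N\ge\max(2p+3,g')$, so $C_N\in\mathcal C$, and invoke the theorem that every bounded expansion class has all restricted dualities \cite{ECM2009,Sparsity} to obtain a finite graph $H_p$ with
\[
\forall G\in\mathcal C:\qquad (C_N\nrightarrow G)\ \Longleftrightarrow\ (G\to H_p).
\]
At $G=C_N$ the left side is false, so $C_N\nrightarrow H_p$ and $H_p$ has no odd cycle of length $\le N$; at $G=C_{N+2}\in\mathcal C$ the left side is true (there is no homomorphism $C_N\to C_{N+2}$), so $C_{N+2}\to H_p$ and $H_p$ is non‑bipartite. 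Hence $H_p$ is non‑bipartite of odd‑girth strictly greater than $N\ge 2p+1$, and ``$G\to H_p$'' is expressed on $\mathcal C$ by the first‑order sentence asserting that $C_N$ has no homomorphism into $G$; this proves $(1)$.

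\emph{Direction $(1)\Rightarrow(2)$, somewhere dense case.} We argue the contrapositive. If $\mathcal C$ is somewhere dense, fix a witness $p$; since a $p$‑subdivision witness yields a $(p{+}1)$‑subdivision witness (subdivide one further edge on each subdivision path and use topological closure), we may take $p$ even. Suppose, for contradiction, that for this $p$ some non‑bipartite $H$ of odd‑girth $>2p+1$ admits a first‑order definable $H$‑colouring on $\mathcal C$. As outlined in the Introduction, the homomorphism preservation theorem applied to the subclass $\mathcal C'\subseteq\mathcal C$ of $p$‑subdivisions of simple graphs makes this colouring a restricted duality on $\mathcal C'$: $\{G\in\mathcal C':G\to H\}=\{G\in\mathcal C':\ F\nrightarrow G\text{ for all }F\in\mathcal F\}$ for a finite set $\mathcal F$. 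Testing this against a bipartite member of $\mathcal C'$ with an edge (a long subdivided path; it maps to $H$ since $H$ has an edge) forces every $F\in\mathcal F$ to be non‑bipartite. Now take, by the Erd\H os construction \cite{ErdH1959}, a graph $M$ whose girth exceeds $\max_{F\in\mathcal F}|V(F)|$ and whose chromatic number exceeds $|V(H)|$, and let $M^{(p)}$ be its $p$‑subdivision. As $p$ is even, the shortest odd cycle of $M^{(p)}$ has length at least $(p{+}1)$ times the girth of $M$, hence more than $|V(F)|$ for each $F$; since every $F$ is non‑bipartite it does not map into $M^{(p)}$, so the duality yields $M^{(p)}\to H$. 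But a homomorphism $M^{(p)}\to H$ is exactly a proper colouring of $M$ by the vertices of $H$ in which colours $u,v$ may appear on adjacent vertices iff $u$ and $v$ are joined by a walk of length $p{+}1$ in $H$; the odd‑girth hypothesis makes this compatibility relation loopless, so a bounded number of colours suffices, contradicting $\chi(M)>|V(H)|$. Thus no such $H$ exists and $(1)$ fails at $p$.

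\emph{The nowhere dense, not bounded expansion case — the main obstacle.} Here the homomorphism preservation theorem for nowhere dense classes \cite{Dawar2010,Sparsity} already makes every first‑order definable $H$‑colouring a restricted duality on all of $\mathcal C$, and the failure of bounded expansion provides a fixed $p$ together with graphs of arbitrarily large average degree whose $p$‑subdivisions occur as subgraphs of members of $\mathcal C$. If these graphs could additionally be required to have arbitrarily large girth, the colouring argument of the previous paragraph would go through verbatim (large chromatic number being attainable together with large girth), forcing $H$ bipartite or of bounded odd‑girth and so making $(1)$ fail for a sufficiently large $p$. The gap — which is precisely what Conjecture~\ref{conj:nd} is meant to close — is the passage from ``large average degree'' to ``large average degree and large girth'' inside $\mathcal C$: the graphs handed to us by the failure of bounded expansion may have bounded girth, and excluding a non‑bipartite restricted dual of unbounded odd‑girth in that situation is the hard point. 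Granting Conjecture~\ref{conj:nd}, the equivalence follows.
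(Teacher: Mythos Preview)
Your proof plan is correct and follows essentially the same route as the paper's proof of Theorem~\ref{thm:main}: restricted duals of odd cycles for $(2)\Rightarrow(1)$, HPT on ${\rm Sub}_{2p}(\GRA)$ plus an Erd\H os high-girth/high-chromatic graph for the somewhere dense case, and HPT for nowhere dense classes plus Conjecture~\ref{conj:nd} for the remaining case. Your $(2)\Rightarrow(1)$ is in fact argued more carefully than the paper's sketch (you explicitly verify non-bipartiteness of $H_p$ via $C_{N+2}\to H_p$ and handle the acyclic case separately), and your parity adjustment ``take $p$ even'' plays the same role as the paper's choice to work with $2p$-subdivisions throughout.
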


In this paper, we make a significant progress toward a solution to Conjecture~\ref{conj:main}. We state a structural conjecture, Conjecture~\ref{conj:nd}, about nowhere dense classes that fail to be bounded expansion classes, which expresses that such classes are characterized by the existence (as subgraphs of graphs in the class) of $p$-subdivisions of graphs with arbitrarily large chromatic number and girth. More precisely:

\begin{conjecture}
\label{conj:nd}
Let $\mathcal C$ be a monotone nowhere dense class that does not have bounded expansion.
Then there exists an integer $p$ such that $\mathcal C$ includes $p$-subdivisions of
graphs with arbitrarily large chromatic number and girth.
\end{conjecture}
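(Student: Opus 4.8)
Towards a proof of Conjecture~\ref{conj:nd} we outline the following strategy. The idea is to turn the failure of bounded expansion into the existence of dense bounded-depth topological minors, to recognise the family of these minors as a monotone nowhere dense class of unbounded average degree, and then to exhibit inside such a family bounded subdivisions of graphs whose chromatic number and girth are simultaneously large.

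Since $\mathcal C$ is not of bounded expansion there is an integer $r$ with $\nabla_r(\mathcal C)=\infty$, hence --- by the known equivalence between bounded grads and bounded topological grads (Dvo\v{r}\'ak) --- an integer $r'$ with $\widetilde\nabla_{r'}(\mathcal C)=\infty$. Thus there are graphs $M_1,M_2,\dots$, with $M_d$ of average degree at least $d$, such that for each $d$ some subdivision of $M_d$ in which every edge is subdivided at most $2r'$ times is a subgraph of a graph of $\mathcal C$; since $\mathcal C$ is monotone we may pass to a subgraph and assume $M_d$ has minimum degree at least $d/2$. Let $\mathcal H$ be the class of all subgraphs of the graphs $M_d$. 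By composing shallow topological minor operations, a depth-$s$ topological minor of some $M_d$ is a depth-$O(r's)$ topological minor of a graph of $\mathcal C$, so the clique number of the $s$-shallow topological minors of $\mathcal H$ is bounded for every fixed $s$; hence $\mathcal H$ is monotone and nowhere dense, and it has unbounded average degree. It now suffices to produce, for some fixed $q$, $q$-subdivisions in $\mathcal H$ of graphs of arbitrarily large chromatic number and girth: composing such a subdivision with the subdivisions above (and, if $\mathcal C$ is in addition topologically closed, padding to a uniform subdivision length) yields an integer $p$ and the required subdivided graphs inside $\mathcal C$.

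The core step is therefore the following: \emph{a monotone nowhere dense class of unbounded average degree contains, for some fixed $q$, the $q$-subdivisions of graphs of arbitrarily large chromatic number and girth.} I would first show that $\mathcal H$ has unbounded chromatic number: if every graph of $\mathcal H$ were $c$-colourable, take $G\in\mathcal H$ of very large average degree, pass to a subgraph of large minimum degree, and let $A$ be a largest colour class; every vertex of $A$ sends all of its many edges into $V(G)\setminus A$, and the plan is to route, for every pair in a large subset of $A$, a short internally disjoint path through $V(G)\setminus A$, thereby embedding a bounded-depth subdivision of a large clique and contradicting nowhere density. Applying this with larger and larger $c$ gives unbounded chromatic number. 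One then has to secure the girth simultaneously: from the dense colour-class configuration produced above one extracts --- or, after a further controlled subdivision, re-routes --- a \emph{locally sparse} witness, using once more that the host graphs lie in a nowhere dense class and hence contain no short dense substructure, so that the resulting subdivided clique (respectively subdivided high-chromatic graph) has large girth.

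The main obstacle is precisely this core step, which is essentially equivalent to the conjecture itself. The delicate point is the implication ``unbounded average degree $\Rightarrow$ unbounded chromatic number'': large average degree alone only forces large \emph{bipartite} subgraphs, so one must genuinely use nowhere density to exclude every dense low-chromatic configuration, and the path-routing that converts such a configuration into a bounded-depth subdivided clique must be carried out in a host graph about which little is known beyond nowhere density. For specific dense families of large-girth graphs (incidence graphs of generalized polygons, bipartite analogues of Ramanujan graphs, random high-girth bipartite graphs) such routing can be performed and shows these families to be somewhere dense; but no single argument covering all monotone nowhere dense classes of unbounded average degree is known, which is why the statement is given here only as a conjecture. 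The remaining steps --- extracting the dense shallow minors, checking that they form a nowhere dense class, and transferring the subdivided graphs back to $\mathcal C$ --- are routine manipulations with grads and shallow (topological) minors.
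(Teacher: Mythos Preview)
The statement you are addressing is stated in the paper as a \emph{conjecture}; the paper does not prove it. The paper's only progress toward it is Proposition~\ref{prop:conj}, which shows that Conjecture~\ref{conj:nd} follows from either the Erd\H os--Hajnal conjecture (Conjecture~\ref{conj:EH}) or Thomassen's conjecture (Conjecture~\ref{conj:Thomassen}). That proof runs as follows: from the failure of bounded expansion one extracts (via pigeon-hole on subdivision lengths) an integer $q$ such that $\mathcal C$ contains exact $q$-subdivisions of graphs of arbitrarily large average degree; Dvo\v r\'ak's Lemma~\ref{lem:dvo} then converts large minimum degree into a $1$-subdivision of a graph with large chromatic number, and one of the two assumed conjectures is invoked to obtain, inside that graph, a subgraph with large girth as well.

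Your outline is a direct attack rather than a reduction to known conjectures, and you correctly flag that the ``core step'' is open and essentially restates the conjecture. Two remarks may help you see where the difficulty actually lies. First, your reduction to an auxiliary class $\mathcal H$ does not gain anything: $\mathcal H$ is again a monotone nowhere dense class of unbounded average degree, which is exactly the hypothesis you started with (up to passing to $\mathcal C\shtm r'$). Second, the step you single out as delicate --- ``unbounded average degree $\Rightarrow$ unbounded chromatic number'' --- is \emph{not} the obstacle: Lemma~\ref{lem:dvo} already shows that a graph of large minimum degree contains a $1$-subdivision of a graph of large chromatic number, so unbounded average degree in $\mathcal H$ immediately gives unbounded $\chi(\mathcal H\shtm\frac12)$. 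The genuine obstruction is obtaining large \emph{girth} simultaneously with large chromatic number, and your sketch for this (``re-routing'' to a locally sparse witness using nowhere density) is precisely the part with no known argument. This is why the paper resorts to the Erd\H os--Hajnal and Thomassen conjectures, each of which supplies exactly the missing girth control.

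A minor technical point: the conjecture asks for \emph{exact} $p$-subdivisions in $\mathcal C$, and $\mathcal C$ is only assumed monotone, not topologically closed. Your transfer step from $\mathcal H$ back to $\mathcal C$ produces $\leq p$-subdivisions, and your fix (``padding to a uniform subdivision length'') requires topological closure, which you do not have. The paper handles the analogous issue by a pigeon-hole on subdivision lengths \emph{before} applying Lemma~\ref{lem:dvo}; in your route the pigeon-hole would have to be done \emph{after} securing large chromatic number and girth, and one must check that restricting to edges of a single subdivision type preserves these (girth survives trivially; chromatic number does not, in general).
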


Our main result toward a characterization of first-order definable colorings is the following reduction.

\begin{theorem}
\label{thm:main}
Let $\mathcal C$ be a hereditary topologically closed class of graphs that is somewhere dense or has bounded expansion. Then
the following properties are equivalent:
\begin{enumerate}
\item   for every integer $p$ there is a non-bipartite graph $H_p$ of odd-girth strictly greater than $2p+1$ such that $H_p$-coloring is first-order definable on $\mathcal C$;
\item the class $\mathcal C$ has bounded expansion.
\end{enumerate}

Moreover, if Conjecture~\ref{conj:nd} holds then the statement holds for every hereditary addable topologically closed class of graphs, that is:
if Conjecture~\ref{conj:nd} holds then Conjecture~\ref{conj:main} also holds.
\end{theorem}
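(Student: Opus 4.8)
The plan is to split according to the hypothesis — either $\mathcal C$ is somewhere dense, or $\mathcal C$ has bounded expansion, and exactly one of these holds — and then, for the conditional last sentence, to use Conjecture~\ref{conj:nd} to reduce the only remaining possibility (nowhere dense but not bounded expansion) to the somewhere dense argument. Before doing either case I would record one closure observation that carries the combinatorial weight of the structural hypotheses: a hereditary topologically closed class is automatically monotone (to delete an edge $uv$, first subdivide it once, which stays in $\mathcal C$, then delete the new degree-two vertex, an induced-subgraph operation); and if such a $\mathcal C$ is moreover somewhere dense with witness $p$, then it actually \emph{contains} the $p$-subdivision $R^{(p)}$ of every simple graph $R$. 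One proves this by taking a member $M\supseteq K_n^{(p)}$, subdividing all edges of $M$ lying outside the fixed copy of $K_n^{(p)}$ enough times that this copy becomes an \emph{induced} subgraph, and applying heredity; since $R^{(p)}$ is an induced subgraph of $K_{|V(R)|}^{(p)}$, it follows that $R^{(p)}\in\mathcal C$. Thus in the somewhere dense case the subclass $\mathcal C'\subseteq\mathcal C$ of the $p$-subdivisions of simple graphs, as in the outline, is simply $\{R^{(p)}:R\text{ simple}\}$.

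\emph{Bounded expansion: proving $(2)\Rightarrow(1)$} (the reverse implication being vacuous here). Fix $p$. If every member of $\mathcal C$ is bipartite, take $H_p=C_{2p+3}$: every bipartite $G\in\mathcal C$ maps to $C_{2p+3}$, so $H_p$-coloring on $\mathcal C$ is defined by the always-true sentence. Otherwise $\mathcal C$ contains a non-bipartite graph, and subdividing each of its edges an even number of times multiplies every cycle length by an odd factor and hence preserves parity, so $\mathcal C$ contains non-bipartite graphs of arbitrarily large odd-girth, in particular one of odd-girth $>2p+3$. Now I invoke the theorem that bounded expansion classes have all restricted dualities~\cite{Sparsity}: there is a graph $H_p$ with $C_{2p+3}\nrightarrow H_p$ such that, for every $G\in\mathcal C$, $C_{2p+3}\nrightarrow G\iff G\to H_p$. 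Since $C_{2p+3}\to G$ is equivalent to $G$ having odd-girth at most $2p+3$, the non-bipartite graph of odd-girth $>2p+3$ maps to $H_p$, forcing $H_p$ to be non-bipartite; and then $C_{2p+3}\nrightarrow H_p$ forces the odd-girth of $H_p$ to exceed $2p+3>2p+1$. Finally $C_{2p+3}\nrightarrow G$ is a single universal first-order sentence, so $H_p$-coloring is first-order definable on $\mathcal C$; this is $(1)$.

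\emph{Somewhere dense, and the conditional extension: proving $(1)\Rightarrow(2)$.} Here $(2)$ fails, so it suffices to refute $(1)$ for a single $p$, namely the somewhere-dense witness, for which $\mathcal C'=\{R^{(p)}:R\text{ simple}\}\subseteq\mathcal C$. If $H$-coloring is first-order definable on $\mathcal C$, then restricting the defining sentence to $\mathcal C'$ gives a first-order property closed under taking homomorphic preimages, so by the homomorphism preservation theorem available for $p$-subdivision classes~\cite{Dawar2010,Sparsity} it is a restricted duality: a finite set $\mathcal F$ of connected graphs with $R^{(p)}\to H\iff(\forall F\in\mathcal F,\ F\nrightarrow R^{(p)})$ for all simple $R$. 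Testing $R=K_2$ (so $R^{(p)}=P_{p+2}$, which maps into $H$ whenever $H$ has an edge) shows no $F\in\mathcal F$ is bipartite or edgeless, hence every $F\in\mathcal F$ contains an odd cycle. Feeding in the Erdős construction~\cite{ErdH1959}, choose $R$ with $\chi(R)>|V(H)|$ and girth exceeding $\max_{F\in\mathcal F}|V(F)|$; then $R^{(p)}$ has girth $(p+1)\cdot\mathrm{girth}(R)$, so any homomorphic image of an $F\in\mathcal F$ in $R^{(p)}$ spans fewer vertices than the girth and is therefore a tree, contradicting non-bipartiteness of $F$. Thus no $F$ maps into $R^{(p)}$, so $R^{(p)}\to H$; but $R^{(p)}\to H$ is equivalent to $R\to H^{\langle p+1\rangle}$ (the graph on $V(H)$ whose edges join vertices linked by a walk of length $p+1$), impossible since $\chi(R)>|V(H)|\ge\chi(H^{\langle p+1\rangle})$. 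Tracking the parity of $p+1$ through the argument sharpens the conclusion to: $H$ is bipartite or has odd-girth at most $2p+1$; hence no admissible $H_p$ exists and $(1)$ fails. For the concluding ``moreover'', the only case left is $\mathcal C$ nowhere dense but not of bounded expansion; then the homomorphism preservation theorem holds for $\mathcal C$ itself, so a first-order definable $H$-coloring is again a restricted duality over $\mathcal C$, and Conjecture~\ref{conj:nd} supplies an integer $p$ for which $\mathcal C$ contains the $p$-subdivisions of graphs of arbitrarily large chromatic number \emph{and} girth — precisely what the Erdős step consumes. Running the argument above verbatim with these graphs in place of the Erdős graphs refutes $(1)$ for that $p$, and together with the two unconditional cases this gives the equivalence for every hereditary addable topologically closed class, i.e.\ Conjecture~\ref{conj:main}.

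The delicate technical point is the homomorphism preservation input in the somewhere dense (and nowhere dense) cases: obtaining it in the exact ``finite set of connected forbidden graphs'' form for the $p$-subdivision class $\mathcal C'$, and carrying the parity bookkeeping in the Erdős step so that the threshold comes out as $2p+1$ and not larger. The genuine obstacle, and the reason the last assertion is only conditional, is Conjecture~\ref{conj:nd} itself: without it one has no guaranteed supply of high-chromatic-number, high-girth graphs whose $p$-subdivisions lie in a given nowhere dense, non-bounded-expansion class, so the Erdős argument has nothing to act on.
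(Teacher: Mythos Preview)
Your overall architecture matches the paper's exactly: $(2)\Rightarrow(1)$ via the restricted dual of an odd cycle, and $(1)\Rightarrow(2)$ via HPT on the subdivision subclass plus Erd\H{o}s high-girth/high-chromatic graphs, with the conditional extension handled by the nowhere-dense HPT together with Conjecture~\ref{conj:nd}. Your $(2)\Rightarrow(1)$ is in fact more careful than the paper's (you verify non-bipartiteness of the dual $H_p$, which the paper leaves implicit). Two points, however, need repair.

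First, the step ``$R^{(p)}\to H$ is equivalent to $R\to H^{\langle p+1\rangle}$, impossible since $\chi(R)>|V(H)|\ge\chi(H^{\langle p+1\rangle})$'' breaks down when $p$ is odd: then $p+1$ is even, every non-isolated vertex of $H$ supports a closed walk of length $p+1$, so $H^{\langle p+1\rangle}$ has loops and $R\to H^{\langle p+1\rangle}$ is trivially true for every $R$. Your phrase ``tracking the parity of $p+1$'' does not by itself fix this; what does is the paper's device of working throughout with $2p$-subdivisions, so that the branch-to-branch path length $2p+1$ is odd and the hypothesis ${\rm odd\text{-}girth}(H)>2p+1$ forbids mapping adjacent branching vertices to the same image. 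Topological closure lets you make this replacement both in the somewhere-dense case and in the conditional case (if the conjecture supplies $p_0$, pass to any even $p\ge p_0$).

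Second, your citation for the HPT on the class $\{R^{(p)}:R\text{ simple}\}$ is off: this class is somewhere dense, hence not quasi-wide, so \cite{Dawar2010} does not apply. The correct input here is Rossman's finite HPT pulled back through the first-order interpretation that constructs the $p$-subdivision; this is precisely the paper's Corollary~\ref{cor:subhpt}.
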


In support to Conjecture~\ref{conj:nd}, we prove (Proposition~\ref{prop:conj})
 that it would follow
from 
a positive solution to any of the following
 two well known conjectures.

\begin{conjecture}[Erd\H os and Hajnal \cite{Erdoshyp}]
\label{conj:EH}
For every integers $g$ and $n$ there exists an integer $N=f(g,n)$ such that every graph $G$ with chromatic number at least $N$ has a subgraph $H$ with girth at least $g$ 
and chromatic number at least $n$.
\end{conjecture}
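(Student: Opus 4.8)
The plan is to attempt Conjecture~\ref{conj:EH} by induction on the girth parameter $g$. First I would observe that the first nontrivial case is $g=4$ (triangle-free subgraphs), since a graph of high chromatic number certainly contains cycles and hence trivially has a subgraph of girth $\ge 3$. The problem then splits into a \emph{base case}, producing triangle-free subgraphs of prescribed chromatic number, and an \emph{inductive step}, raising the girth from $\ge g$ to $\ge g+1$ while keeping the chromatic number above the target $n$. In each case one seeks a function $N(g,n)$ such that $\chi(G)\ge N(g,n)$ forces the desired subgraph.

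For the base case I would invoke a R\"odl-type \emph{sparsening}: from a graph of enormous chromatic number one extracts a triangle-free subgraph whose chromatic number is still as large as desired. This is the most tractable regime, yet it already exhibits the essential difficulty---guaranteeing that the chromatic number \emph{survives} the removal of the forbidden short cycles.

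For the inductive step I would take a subgraph $H$ of $G$ of girth $\ge g$ whose chromatic number is chosen far larger than $n$, and destroy its $g$-cycles (the shortest ones) to reach girth $\ge g+1$. Two natural mechanisms offer themselves: first thin $H$ so that its $g$-cycles become sparse or edge-disjoint and then delete one edge from each; or pass to a random subgraph retaining each edge with probability $p$ tuned so that the expected number of $g$-cycles is negligible, and delete an edge per surviving cycle. In either case the entire step reduces to producing a \emph{lower bound} on the chromatic number of the resulting, sparser graph.

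The hard part---and the reason the statement is still a conjecture---is exactly this chromatic lower bound. Chromatic number is a global invariant admitting no known \emph{local certificate} compatible with large girth: the usual bounds come from cliques (destroyed by the girth constraint), from fractional, spectral, or Lov\'asz-$\vartheta$ relaxations (all small for the extremal high-girth high-chromatic examples such as Kneser and shift graphs), or from the topological connectivity of the neighbourhood complex, which is not visibly inherited by a sparse subgraph. Erd\H os's probabilistic construction~\cite{ErdH1959} builds high-girth high-chromatic graphs from scratch but gives no way to \emph{locate} one inside an arbitrary high-chromatic host. The decisive task is therefore to find a chromatic lower bound---most plausibly topological or entropy-based---that is robust under the girth-increasing deletions above; a complementary avenue, nearer the original Erd\H os--Hajnal viewpoint, is to pass to the associated set system and compare its local and global chromatic numbers. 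Resolving this robustness is the crux on which the induction hinges.
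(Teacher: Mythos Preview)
There is nothing to compare against: the paper does not prove Conjecture~\ref{conj:EH}. It is stated there as an \emph{open} conjecture of Erd\H os and Hajnal, with the remark that only the case $g=4$ is known (R\"odl~\cite{Rodl1977}) and that the general case remains unsolved. The paper's interest in the conjecture is purely conditional: it is invoked in Proposition~\ref{prop:conj} as one of two hypotheses either of which would imply Conjecture~\ref{conj:nd}.

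Your proposal is therefore not a proof, and to your credit you say so explicitly. What you have written is a fair diagnosis of \emph{why} the conjecture is hard: the base case $g=4$ is exactly R\"odl's theorem, and the obstacle to the inductive step is the absence of any chromatic lower bound that survives the deletion of short cycles. That matches the state of the art as the paper reports it. But be clear that this is commentary on an open problem, not a proof attempt with a fixable gap; the ``decisive task'' you isolate at the end is the entire conjecture, and nothing in your outline reduces it to something easier.
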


The case $g = 4$ of the conjecture was proved by
R\"odl \cite{Rodl1977}, while the general case is still open.
Remark that the existence of graphs of both arbitrarily high chromatic number and high girth
is a well known result of Erd\"os \cite{ErdH1959}.

\begin{conjecture}[Thomassen \cite{Thomassen1983}]
\label{conj:Thomassen}
For all integers $c, g$ there exists an integer $f(c, g)$
such that every graph $G$ of average degree at least $f(c, g)$ contains a subgraph
of average degree at least $c$ and girth at least $g$.
\end{conjecture}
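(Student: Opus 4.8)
The plan is to prove Conjecture~\ref{conj:Thomassen} by first replacing the hypothesis on average degree with one on minimum degree, and then raising the girth one cycle-length at a time. Since every graph of average degree at least $d$ contains a subgraph of minimum degree at least $d/2$, it suffices to produce, for all $c,g$, an integer $\delta=\delta(c,g)$ such that every graph of minimum degree at least $\delta$ contains a subgraph of average degree at least $c$ and girth at least $g$. I would establish this by induction on $g$. The base case $g\le 3$ is immediate, since girth at least $3$ holds vacuously for simple graphs. For the inductive step I assume that a subgraph $H$ of large minimum degree and girth at least $k:=g-1$ has already been extracted, and the task becomes to destroy the cycles of length exactly $k$ (the shortest surviving cycles) while forfeiting only a controlled fraction of the degree.

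The core of the argument is a dichotomy applied to these shortest cycles. First I would pass to an almost-regular subgraph, so that the maximum degree is within a controlled (at worst logarithmic) factor of the minimum degree; this is what makes any counting or deletion argument quantitatively usable, as it bounds the number of short walks, and hence short cycles, emanating from a given vertex. Given such a graph of girth at least $k$ with $m$ edges, I count its $k$-cycles. If they are \emph{few}, in the sense that their total number is an $o(1)$ fraction of $m$, then deleting one edge from each removes only a negligible fraction of the edges; passing once more to a minimum-degree subgraph then retains minimum degree $\Omega(\delta)$ while strictly increasing the girth, which closes the induction.

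The hard part will be the complementary case, in which the $k$-cycles are \emph{abundant}, so that no economical deletion can preserve the degree. Here the strategy is a density increment: an abundance of short cycles ought to expose a part of the graph that is denser, relative to its order, than the whole, onto which one recurses. The delicate point is to choose the potential governing this recursion so that it terminates with the average degree and the girth simultaneously under control. I expect the right framework is a sublinear-expander decomposition in the spirit of Komlós and Szemerédi: one either works inside a sublinear expander of minimum degree $\Omega(\delta)$, where strong connectivity allows short cycles to be rerouted through long, internally disjoint paths and thereby thinned out, or one isolates a genuinely denser non-expanding piece and recurses there.

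The main obstacle is precisely this last case. Forcing the dichotomy to deliver high girth \emph{together with} average degree growing to the prescribed value $c$ — rather than the bounded-average-degree or small-girth conclusions (girth at most $6$, by Kühn and Osthus) that current techniques attain — is exactly what has kept Conjecture~\ref{conj:Thomassen} open, and is where a genuinely new idea is required. I would therefore concentrate the technical effort on controlling the expander-versus-dense recursion, since every other step above is either standard or a routine consequence of the inductive hypothesis.
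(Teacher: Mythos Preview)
The statement you are addressing is Thomassen's conjecture, and the paper does \emph{not} prove it. It is quoted as an open problem; the paper merely records that the case $g=4$ is elementary (delete half the edges to go bipartite) and that $g=6$ is due to K\"uhn and Osthus, and then uses the conjecture as a hypothesis in Proposition~\ref{prop:conj}. There is therefore no ``paper's own proof'' to compare your proposal against.

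Your proposal is not a proof either, and you say so yourself. The reduction to minimum degree, the passage to an almost-regular subgraph, and the sparse-case edge deletion are all fine and standard. But the dense case---where $k$-cycles are abundant and you appeal to a density-increment or expander-versus-dense recursion---is exactly the obstruction that keeps the conjecture open, and your text explicitly concedes that ``a genuinely new idea is required'' there. That is an honest assessment of the state of the art, but it means what you have written is a research outline, not a proof: the inductive step from girth $k$ to girth $k+1$ is simply not closed. If the assignment was to supply a proof of this statement as it appears in the paper, the correct answer is that no proof is expected or possible here, since the paper itself treats it as a conjecture.
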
 

The case $g=4$ of this conjecture is a direct consequence of the simple fact
that every graph can be made bipartite by deleting at most half of its edges.
 The case $g=6$ has been proved in \cite{K`uhn2004}. 


Although we do not settle Conjecture~\ref{conj:main}, our study led us to the two following characterization theorems of classes that have all restricted dualities.

\begin{theorem}
\label{thm:sub}
Let $\mathcal C$ be a topologically closed class of  graphs. 

The following properties are equivalent:
\begin{enumerate}
	\item the class $\mathcal C$ has bounded expansion;
	\item the class $\mathcal C$ has all restricted dualities;
	\item for every odd integer $g$ there exists a graph $H_g$ with odd-girth greater than $g$ such that every graph $G\in\mathcal C$ with odd-girth greater than $g$ has a homomorphism to $H_g$.
	\end{enumerate}
\end{theorem}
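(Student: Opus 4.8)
The plan is to prove the cyclic chain $(1)\Rightarrow(2)\Rightarrow(3)\Rightarrow(1)$. For $(1)\Rightarrow(2)$ I would invoke the known fact that every class of bounded expansion has all restricted dualities; the argument I have in mind is the standard one through low tree-depth colourings. Given a connected $F$, one colours each $G\in\mathcal C$ with a bounded number of colours so that any $|V(F)|-1$ of the colour classes induce a subgraph of bounded tree-depth; since $F$ is connected, an $F$-free graph of bounded tree-depth belongs, up to homomorphic equivalence, to one of finitely many ``types'', and one glues these finitely many types into a single finite graph $H_F$ with $F\nrightarrow H_F$ for which $F\nrightarrow G\iff G\rightarrow H_F$ holds throughout $\mathcal C$.

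The implication $(2)\Rightarrow(3)$ is immediate. For odd $g$ the odd cycle $C_g$ is connected, so by $(2)$ it has a restricted dual $H_g$ with $C_g\nrightarrow H_g$; and for any graph $K$ one has $C_g\nrightarrow K$ exactly when the odd-girth of $K$ exceeds $g$. Thus $H_g$ has odd-girth $>g$, and for $G\in\mathcal C$ one has $G\rightarrow H_g$ iff the odd-girth of $G$ exceeds $g$, which is precisely $(3)$.

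The heart of the matter is $(3)\Rightarrow(1)$, which I would prove by contraposition, splitting into a structural part and an elementary part. Assume $\mathcal C$ is topologically closed but not of bounded expansion. The structural part --- and the main obstacle --- is to produce an integer $p$, which (replacing $p$ by $p+1$ and using topological closure if necessary) we may take even, such that $\mathcal C$ contains the $p$-subdivision $G_i^{(p)}$ of graphs $G_i$ with $\chi(G_i)\to\infty$. This is exactly where the class taxonomy is used: if $\mathcal C$ is somewhere dense one fixes a $p$ and takes the $p$-subdivisions of all cliques, while in the complementary nowhere dense case one invokes an Erd\H os-type construction to obtain bounded-depth subdivisions of graphs of large chromatic number --- the same ingredient underlying Theorem~\ref{thm:main}. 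Granting this, set $g:=2p+1$; since $\chi(G_i)\to\infty$ the $G_i$ are eventually non-bipartite, so the odd-girth of $G_i^{(p)}$ equals $(p+1)$ times that of $G_i$, hence is at least $3(p+1)>2p+1=g$. By $(3)$ there is then a single finite $H_g$ of odd-girth $>g$ with $G_i^{(p)}\rightarrow H_g$ for all $i$.

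For the elementary part I would use the walk-power reformulation: writing $H^{[k]}$ for the graph on $V(H)$ in which $x$ is adjacent to $y$ exactly when $H$ has a walk of length $k$ from $x$ to $y$, one has $G^{(p)}\rightarrow H\iff G\rightarrow H^{[p+1]}$ (restrict a homomorphism to the branch vertices and read each subdivided edge as a walk of length $p+1$; conversely expand each edge of $G$ along such a walk). Since $p+1$ is odd and the odd-girth of $H_g$ exceeds $g=2p+1\ge p+1$, the graph $H_g$ has no closed walk of length $p+1$, so $H_g^{[p+1]}$ is loopless and hence $\chi\bigl(H_g^{[p+1]}\bigr)\le|V(H_g)|$. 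But $G_i\rightarrow H_g^{[p+1]}$ then forces $\chi(G_i)\le|V(H_g)|$ for every $i$, contradicting $\chi(G_i)\to\infty$. Thus $(3)$ fails whenever $(1)$ fails, closing the cycle. The only genuinely hard step is the structural one --- extracting from the mere failure of bounded expansion a uniformly bounded subdivision depth together with unbounded chromatic number; $(1)\Rightarrow(2)$ is soft homomorphism-duality bookkeeping and the rest of $(3)\Rightarrow(1)$ is the short walk-power computation above.
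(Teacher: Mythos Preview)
Your implications $(1)\Rightarrow(2)$ and $(2)\Rightarrow(3)$ are exactly the paper's, and for $(3)\Rightarrow(1)$ your overall shape --- contrapositive, a structural step producing bounded-depth subdivisions of graphs of unbounded chromatic number, then a parity/branch-vertex argument --- is also the paper's. Your walk-power formulation of the elementary step is a clean repackaging of the paper's direct observation that the two ends of a path of odd length $g$ cannot be identified by a homomorphism into a graph of odd-girth $>g$.

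The gap is in the structural step. You call it ``the main obstacle'' and propose a somewhere-dense/nowhere-dense split, handling the second case by ``an Erd\H os-type construction \ldots\ the same ingredient underlying Theorem~\ref{thm:main}''. This misidentifies the tool. The nowhere-dense part of Theorem~\ref{thm:main} rests on Conjecture~\ref{conj:nd}, which is open; and in any case an Erd\H os-type existence result about graphs of large girth and chromatic number says nothing about which graphs lie in $\mathcal C$. As written, your nowhere-dense case is not a proof. What the paper actually uses is a one-line appeal to Theorem~\ref{thm:BEchi} (equivalently, Dvo\v r\'ak's Lemma~\ref{lem:dvo}): a class has bounded expansion iff $\chi(\mathcal C\shtm p)<\infty$ for every $p$. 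Hence the failure of bounded expansion already delivers an integer $p$ with shallow topological minors of unbounded chromatic number --- no taxonomy split, no Erd\H os construction. Topological closure then lets one pass to an odd $g\ge 2p+1$ and produce a member $G_0\in\mathcal C$ of odd-girth $>g$ containing the $(g-1)$-subdivision of some $H_0$ with $\chi(H_0)>|H_g|$; your walk-power argument (or the paper's equivalent) finishes. So the step you flag as hardest is in fact immediate from the $\chi$-characterization of bounded expansion, and your proposed route through Erd\H os-type constructions both overstates the difficulty and invokes the wrong lemma.
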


These results motivate more general studies of classes of relational structures having all restricted dualities, which includes classes of structures whose Gaifman graphs form a class with bounded expansion. In a very general setting, we obtain the following characterization:

\begin{theorem}
\label{thm:approx}
Let $\mathcal C$ be a class of $\sigma$-structures. Then $\mathcal C$
is bounded and has all restricted dualities  if and only if  for
every integer $t$ there is an integer $N(t)$ such that for every $\mathbf A\in\mathcal C$ there exists a $\sigma$-structure $\mathbf A_t$ (called
{\em $t$-approximation} of $\mathbf A$) such that
\begin{itemize}
 \item $\mathbf A_t$ has order $|A_t|$ at most $N(t)$,
  \item $\mathbf A\rightarrow \mathbf A_t$,
  \item every substructure $\mathbf F$ of $\mathbf A_t$ with order $|F|<t$ has a homomorphism to  $\mathbf A$.
\end{itemize}
\end{theorem}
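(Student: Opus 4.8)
The plan is to prove the two directions separately, using the notion of restricted duality as the bridge in both cases.

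First the easy direction: assume $\mathcal C$ is bounded and has all restricted dualities. Fix $t$. For each finite connected $\sigma$-structure $\mathbf F$ with $|F|<t$, the assumption of all restricted dualities gives a "dual" structure $\mathbf D_{\mathbf F}$ with $\mathbf F\nrightarrow\mathbf D_{\mathbf F}$ and, for every $\mathbf A\in\mathcal C$, the equivalence $\mathbf F\nrightarrow\mathbf A\iff\mathbf A\rightarrow\mathbf D_{\mathbf F}$. Since there are only finitely many such $\mathbf F$ up to isomorphism, let $\mathcal F_t$ be this finite set of "small" connected structures that do not map into $\mathbf A$, and form the product $\mathbf A_t:=\prod_{\mathbf F\in\mathcal F_t,\ \mathbf F\nrightarrow\mathbf A}\mathbf D_{\mathbf F}$. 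Then $\mathbf A\rightarrow\mathbf A_t$ because $\mathbf A$ maps to each factor; any connected substructure of order $<t$ that fails to map to $\mathbf A$ would have to map to $\mathbf A_t$ yet be one of the $\mathbf F$'s whose dual is a factor — contradiction since $\mathbf F\nrightarrow\mathbf D_{\mathbf F}$; a general (possibly disconnected) substructure of order $<t$ maps to $\mathbf A$ iff each of its components does, so this handles the non-connected case too. The order bound $N(t)$ comes from boundedness: all the $\mathbf D_{\mathbf F}$ can be chosen from the class (or controlled by it), so the product has size depending only on $t$. I expect a little care is needed here to ensure the duals themselves are uniformly bounded — this is where "bounded" is used, and it may require replacing $\mathbf D_{\mathbf F}$ by a bounded-size homomorphically equivalent structure, using that $\mathcal C$ is bounded together with the finiteness of $\mathcal F_t$.

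For the converse, suppose the $t$-approximations $\mathbf A_t$ exist with $|A_t|\le N(t)$. Boundedness of $\mathcal C$ is immediate: every $\mathbf A$ maps into some $\mathbf A_t$, and the $\mathbf A_t$ range over a finite set (up to isomorphism) for each fixed $t$, so $\mathcal C$ has a finite "dominating" set in the homomorphism order — in fact $\mathbf A\rightarrow\mathbf A_1$ with $|A_1|\le N(1)$ already shows every member maps to one of finitely many structures. For all restricted dualities, fix a connected $\mathbf F$ and set $t:=|F|+1$. Define $\mathbf D_{\mathbf F}$ to be (the disjoint union of representatives of) all the structures among the finitely many possible $t$-approximations $\mathbf A_t$ (over $\mathbf A\in\mathcal C$) into which $\mathbf F$ does \emph{not} map; equivalently, restrict to those $\mathbf A_t$ with $\mathbf F\nrightarrow\mathbf A_t$. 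Then $\mathbf F\nrightarrow\mathbf D_{\mathbf F}$ by connectedness of $\mathbf F$. If $\mathbf A\in\mathcal C$ and $\mathbf F\nrightarrow\mathbf A$: since $\mathbf F$ is a substructure-sized object, had $\mathbf F\rightarrow\mathbf A_t$ we could \emph{not} immediately conclude $\mathbf F\rightarrow\mathbf A$ — so instead use the third bullet: take an image of $\mathbf F$ inside $\mathbf A_t$; it is a substructure of order $<t$, hence maps to $\mathbf A$, giving $\mathbf F\rightarrow\mathbf A$, a contradiction; therefore $\mathbf F\nrightarrow\mathbf A_t$, so $\mathbf A_t$ is one of the factors of $\mathbf D_{\mathbf F}$ and $\mathbf A\rightarrow\mathbf A_t\rightarrow\mathbf D_{\mathbf F}$. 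Conversely if $\mathbf A\rightarrow\mathbf D_{\mathbf F}$ then $\mathbf F\nrightarrow\mathbf A$ because $\mathbf F\nrightarrow\mathbf D_{\mathbf F}$. This establishes the restricted duality $\mathbf F\nrightarrow\mathbf A\iff\mathbf A\rightarrow\mathbf D_{\mathbf F}$ for all $\mathbf A\in\mathcal C$.

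The main obstacle I anticipate is not the logical skeleton above but the bookkeeping around connectedness and disjoint unions: restricted dualities are usually stated for connected $\mathbf F$, and one must be careful that "substructure of order $<t$" in the third bullet is strong enough to recover $\mathbf F\rightarrow\mathbf A$ from a homomorphic image of $\mathbf F$ in $\mathbf A_t$ (one needs the image, which has order $\le|F|<t$, and then composes the surjection $\mathbf F\twoheadrightarrow\text{image}\hookrightarrow\mathbf A_t$ with the guaranteed map $\text{image}\rightarrow\mathbf A$). A second delicate point is making the size of $\mathbf D_{\mathbf F}$ finite and uniform: this works because for fixed $t$ there are only finitely many structures of order $\le N(t)$ up to isomorphism, so only finitely many distinct $\mathbf A_t$ arise, and the disjoint union of those is a single finite structure — but one should double-check that taking disjoint union (rather than product) is the right operation on the dual side, which it is precisely because $\mathbf F$ is connected.
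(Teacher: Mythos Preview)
Your approach is the same as the paper's: in the forward direction you take the product of the restricted duals $\mathbf D_{\mathbf F}$ over the connected $\mathbf F$ of order less than $t$ with $\mathbf F\nrightarrow\mathbf A$, and in the converse you take the disjoint union of the (finitely many, up to isomorphism) $t$-approximations. The converse direction is correct as written, including your remark that one passes through the homomorphic image of $\mathbf F$ inside $\mathbf A_t$ to invoke the third bullet.

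There is, however, a genuine confusion in the forward direction about where boundedness enters. You write that the size bound $N(t)$ ``comes from boundedness'' and worry that the duals $\mathbf D_{\mathbf F}$ might need to be ``controlled by the class''. This is not the issue: for each fixed $t$ there are only finitely many connected cores $\mathbf F$ of order less than $t$ up to isomorphism, and for each such $\mathbf F$ the dual $\mathbf D_{\mathbf F}$ is a single fixed finite structure, independent of $\mathbf A$. Hence the product over any subset of these $\mathbf F$'s has order at most $\prod_{|\mathbf F|<t}|D_{\mathbf F}|$, which is a constant depending only on $t$. No appeal to boundedness is needed for the size bound.

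Where boundedness is actually needed --- and where your argument currently has a gap --- is to guarantee that the index set $\{\mathbf F:\ |\mathbf F|<t,\ \mathbf F\nrightarrow\mathbf A\}$ is non-empty. If it were empty, your product $\mathbf A_t$ would be the terminal $\sigma$-structure (a single element with all relations full), and the third bullet would then require this terminal object to map to $\mathbf A$, which fails whenever $\mathbf A$ has no such ``loop'' element. The paper handles this by taking a \emph{strict} bound $\mathbf Z$ for $\mathcal C$ (so $\mathbf Z\nrightarrow\mathbf A$ for every $\mathbf A\in\mathcal C$) and observing that, once $t\geq|Z|$, some connected component of the core of $\mathbf Z$ lies in $\mathcal F_t(\mathbf A)$ for every $\mathbf A$. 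That is the sole use of boundedness in this direction; once you insert it, your argument goes through exactly as the paper's does.
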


This paper is organized as follows:

In Section~\ref{sec:prelim}, we recall the notions needed in the development  of our study, in particular class taxonomy and basics on relational structures.

In Section~\ref{sec:hpt}, we discuss classes satisfying a homomorphism preservation theorem (HPT). On these classes, first-order definable $H$-colorings correspond to finite restricted dualities. Moreover, when the considered class is addable, they correspond to finite restricted dualities with connected templates.

In Section~\ref{sec:ard}, we discuss classes having all restricted dualities, that is classes such that to every finite set $\mathcal F$ of connected templates correspond a graph $H$ such that a graph $G$ in the class is $H$-colorable if and only if none of the templates in $\mathcal F$ has a homomorphism to $G$.

\section{Taxonomy of Classes of Graphs}
\label{sec:prelim}
In the following, we denote by $\GRA$ the class of all finite graphs. 
A class of graphs $\mathcal C$ is {\em monotone} (resp. {\em hereditary}, {\em topologically closed}) if every subgraph (resp. every induced subgraph, every subdivision) of a graph in $\mathcal C$ also belongs to $\mathcal C$. Notice that if a class $\mathcal C$ is both hereditary and topologically closed it is also monotone: If $H$ is a subgraph of a graph $G\in\mathcal C$, the graph $H$ is an induced subgraph of the graph $G'\in\mathcal C$ obtained from $G$ by subdividing every edge not in $H$, hence $H\in\mathcal C$. For a graph $G$, we denote by $\omega(G)$ its clique number, by $\chi(G)$ its chromatic number, and by $\overline{\rm d}(G)$ the average degree of its vertices. By extension, for a class of graphs $\mathcal{C}$ we define
\begin{align*}
\omega(\mathcal C)&=\sup\{\omega(G),\ G\in\mathcal{C}\}\\
\chi(\mathcal C)&=\sup\{\chi(G),\ G\in\mathcal{C}\}\\
\overline{\rm d}(\mathcal C)&=\sup\{\overline{\rm d}(G),\ G\in\mathcal{C}\}
\end{align*}

We proposed in \cite{ND_logic,ND_characterization,POMNI} a general classification scheme for graph classes which is based on the density of shallow (topological) minors (we refer the interested reader
to the monography \cite{Sparsity}).
This classification can be defined in several very different ways and we give here one of the simplest definitions, which relates to subdivision:

A {\em subdivision} (resp. a {\em $k$-subdivision}) of an edge $e=\{u,v\}$ of a graph $G$ consists in replacing the edge $e$ by a path (resp. by a path of length $k+1$) with endpoints $u$ and $v$.
A {\em subdivision} of a graph $G$ is a graph $H$ resulting from $G$ by 
subdividing edges; the graph $H$ is the {\em $k$-subdivision} of $G$ if it has been obtained from $G$ by $k$-subdividing all the edges (i.e. if all the edges of $G$ have been replaced by paths of length $k+1$). The graph $H$ is a 
{\em $\leq k$-subdivision} of $H$ if it has been obtained by replacing each edge of $G$ by a path of length at most $k+1$. 

Let $p$ be a half-integer. A graph $H$ is a {\em shallow topological minor} of a graph $G$ {\em at depth $p$} if some $\leq 2p$-subdivision of $H$ is a subgraph of $G$;
	the set of all shallow topological minors of $G$ at depth $p$ is denoted by $G\shtm p$ and, more generally, $\mathcal C\shtm p$ denotes the class of all shallow topological minors at depth $p$ of graphs in $\mathcal C$.

A class of undirected graphs $\mathcal C$ is {\em somewhere dense} if there exists an integer $p$ such that
 the $p$-th subdivision of every finite graph $H$ may be found as a subgraph of some graph in $\mathcal C$; it is  {\em nowhere dense} otherwise.
 
 In other words,  the class $\mathcal C$ is  nowhere dense if
$$\forall p\in\bbbn,\qquad \omega(\mathcal C\shtm p)<\infty.$$

A particular type of nowhere dense classes will be of particular importance in this paper:
A class $\mathcal C$ has {\em bounded expansion} \cite{POMNI} if
$$\forall p\in\bbbn,\qquad\overline{\rm d}(\mathcal C\shm p)<\infty.$$
Among the numerous equivalent characterizations that can be given for the property of having bounding expansion, we will make use of a characterization based on the chromatic numbers of the shallow topological minors of the graphs in the class. This characterization can be deduced from the following result of Dvo{\v r}{\'a}k \cite{Dvo2007,Dvov2007} (see also \cite{Sparsity}):
\begin{lemma}
\label{lem:dvo}
Let $c\geq 4$ be an integer and let $G$ be a graph with minimum degree $d>56(c-1)^2\frac{\log (c-1)}{\log c-\log (c-1)}$. Then the graph $G$ contains a subgraph $G'$ that is the $1$-subdivision of a graph with chromatic number $c$.
\qed
\end{lemma}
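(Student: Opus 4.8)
The plan is to build the required subdivision probabilistically: the graph $H$ that will be subdivided lives on a randomly chosen set $B$ of ``branch vertices'' of $G$, and the length-two paths joining them are routed through randomly chosen common neighbours. One should note at the outset that one cannot take $H=K_c$: the $1$-subdivision of $K_c$ has girth $6$, so a graph $G$ of large girth contains no copy of it however large $\delta(G)$ is; hence $H$ must be allowed to be sparse, and the construction below will produce something resembling a sparse random graph. Concretely, write $n=|V(G)|$, fix an inclusion probability $p=p(c)$ of order $(c-1)^{-2}$ (whose exact value, and the constant $56$, are dictated by the estimates below), form $B$ by including each vertex of $G$ independently with probability $p$, and let $W$ be the set of vertices $w\notin B$ with at least $k:=\lceil pd/2\rceil$ neighbours in $B$. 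Since $\delta(G)\ge d$, a vertex $v\notin B$ has at least $pd$ neighbours in $B$ in expectation, so by a Chernoff bound $\Pr[v\notin W\mid v\notin B]\le e^{-pd/8}$; the hypothesis on $d$ makes $pd/8$ at least a fixed constant times $(c-1)\log(c-1)$, hence bounds this probability by a small constant $\varepsilon<\tfrac14$ --- crucially, we only need this to be a small constant, not to beat $n$. Thus $\mathbb E[|W|]\ge(1-p-\varepsilon)n$ while $\mathbb E[|B|]=pn$, so one may fix a choice of $B$ with $|B|\le 2pn$, $|W|\ge\tfrac12 n$, and $|N(w)\cap B|\ge k$ for every $w\in W$.

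For each $w\in W$ I would then choose, independently and uniformly, an unordered pair $e_w$ of neighbours of $w$ inside $B$, and let $H$ be the simple graph on vertex set $B$ with edge set $\{e_w:w\in W\}$, designating for each edge $e$ of $H$ one $w\in W$ with $e_w=e$ as its ``router''. Routing each edge $\{x,y\}$ of $H$ through its router --- a common neighbour of $x$ and $y$ outside $B$, with distinct routers for distinct edges --- exhibits the $1$-subdivision of $H$ as a subgraph of $G$. So it remains to prove that with positive probability $\chi(H)\ge c$; passing to a subgraph of $H$ then yields a graph of chromatic number exactly $c$ whose $1$-subdivision is still contained in $G$.

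To obtain $\chi(H)\ge c$ it suffices to show that $\alpha(H)<|B|/(c-1)$. For a fixed $S\subseteq B$ with $|S|=\lceil|B|/(c-1)\rceil$ one has $\Pr[S\text{ is independent in }H]=\prod_{w\in W}\bigl(1-\binom{|N(w)\cap S|}{2}\big/\binom{|N(w)\cap B|}{2}\bigr)$. Suppose one can show that, simultaneously for all such $S$, a constant fraction of the vertices $w\in W$ satisfy $|N(w)\cap S|\ge\tfrac{|N(w)\cap B|}{2(c-1)}$, that is, at least half its expected value. Then, since $|N(w)\cap B|\ge k\gg c$, each of those factors is at most $1-\Omega((c-1)^{-2})$, whence $\Pr[S\text{ independent}]\le e^{-\Omega(n/(c-1)^2)}$, and a union bound over the at most $2^{|B|}=2^{O(pn)}$ sets $S$ finishes the proof as soon as $p$ is a small enough multiple of $(c-1)^{-2}$. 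This fixes the regime $p=\Theta((c-1)^{-2})$; the surviving requirement $k\ge\Omega((c-1)\log(c-1))$ then reads $d=\Omega\bigl((c-1)^{2}\tfrac{\log(c-1)}{\log c-\log(c-1)}\bigr)$, using that $\log c-\log(c-1)=\log(1+\tfrac1{c-1})$ lies between $\tfrac1c$ and $\tfrac1{c-1}$, and carefully optimizing the constants in these two competing requirements is what produces the factor $56$.

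The hard part will be exactly this last uniformity statement: for an adversarially chosen candidate independent set $S$, the sets $N(w)\cap S$ could a priori be too small for too many $w\in W$. Ruling this out requires a second concentration argument, over the choice of $B$ itself (a fixed $S$ of the prescribed size behaves like a random $1/(c-1)$-fraction of $B$), and it is made delicate by the fact that $|N(w)\cap B|$ and the event ``$w\in W$'' depend on the same random bits, and by the fact that $G$ may have enormous maximum degree, so that naive Chernoff-plus-union estimates do not apply. I would attack it with Azuma- or Talagrand-type inequalities, if necessary after first replacing $G$ by a subgraph with better-controlled degrees; the later uses of the lemma --- which only apply it to bound chromatic numbers of shallow topological minors of graphs in a class of bounded expansion --- are insensitive to such a reduction.
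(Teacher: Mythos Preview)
The paper does not prove this lemma at all: it is stated with a terminal \texttt{\textbackslash qed} and attributed to Dvo\v{r}\'ak \cite{Dvo2007,Dvov2007}, so there is no ``paper's own proof'' to compare against. Your probabilistic outline is in the right spirit---Dvo\v{r}\'ak's argument is indeed a random-partition construction of this type---but what you have written is a sketch with a self-declared hole, and that hole is exactly the heart of the matter.

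The difficulty you flag is genuine and not merely technical. Once $B$ is fixed, the adversary choosing $S\subseteq B$ sees the bipartite graph between $W$ and $B$, and nothing prevents the degrees \emph{on the $B$ side} from being wildly non-uniform: if a $1/(c-1)$ fraction of $B$ happens to carry only a tiny fraction of the $W$--$B$ edges, the adversary takes that set as $S$ and your product $\prod_{w}(1-f(w))$ is close to $1$. Your Chernoff step controls $|N(w)\cap B|$ from below for each $w$, but gives no control on $\deg_W(b)$ for $b\in B$, which is what governs how evenly the edges $e_w$ can be spread. So the ``constant fraction of $w$ have $|N(w)\cap S|\ge |N(w)\cap B|/2(c-1)$'' claim can fail outright for a fixed $B$, and appealing to Azuma/Talagrand over the choice of $B$ does not obviously rescue it: you would need concentration of a quantity like $\min_{S}\sum_{w}\bigl(|N(w)\cap S|/|N(w)\cap B|\bigr)^2$, which is neither a sum of bounded-difference terms nor obviously certifiable, since a single high-degree vertex of $G$ landing in $B$ can shift many $N(w)\cap B$ at once.

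The standard way out---and what Dvo\v{r}\'ak does---is not to fix $B$ first and then fight the adversary, but to keep the randomness of $B$ and of the pair choices coupled, and to bound directly the expected number of independent sets of the target size (equivalently, to run the union bound over subsets $S$ of $V(G)$ rather than of $B$, weighting by $\Pr[S\subseteq B]$). Your final remark, that the downstream applications tolerate a preprocessing of $G$, is also not quite safe: the lemma is invoked in Proposition~\ref{prop:conj} precisely for classes that are nowhere dense but \emph{not} of bounded expansion, and in Theorem~\ref{thm:BEchi} to characterise bounded expansion itself, so you cannot assume any expansion-type control on $G$ when proving it.
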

Hence the following characterization of classes having bounded expansion:
\begin{theorem}
\label{thm:BEchi}
A class $\mathcal C$ has bounded expansion if and only if it holds
\begin{equation}
\forall p\in\bbbn,\qquad\chi(\mathcal C\shtm p)<\infty.
\end{equation}
\end{theorem}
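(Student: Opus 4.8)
The plan is to reduce the statement to an assertion purely about the classes $\mathcal C\shtm p$ of shallow topological minors. Recall (see~\cite{Sparsity}) that, among the equivalent characterizations of bounded expansion, one has: $\mathcal C$ has bounded expansion if and only if $\overline{\rm d}(\mathcal C\shtm p)<\infty$ for every $p\in\bbbn$ (the ``grad versus topological grad'' equivalence); so it is enough to prove that the two conditions ``$\overline{\rm d}(\mathcal C\shtm p)<\infty$ for all $p$'' and ``$\chi(\mathcal C\shtm p)<\infty$ for all $p$'' are equivalent. Two elementary facts will be used throughout, both immediate from the definition of a shallow topological minor: each class $\mathcal C\shtm p$ is closed under taking subgraphs (restricting a $\leq 2p$-subdivision of $H$ sitting in a graph of $\mathcal C$ to the edges of a subgraph $H'\subseteq H$ yields a $\leq 2p$-subdivision of $H'$ in that same graph), and $\mathcal C\shtm q\subseteq\mathcal C\shtm{q'}$ whenever $q\leq q'$.

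For the implication ``$\overline{\rm d}(\mathcal C\shtm p)<\infty$ for all $p$'' $\Rightarrow$ ``$\chi(\mathcal C\shtm p)<\infty$ for all $p$'' I would use only the degeneracy bound: if $\overline{\rm d}(\mathcal C\shtm p)\leq D$, then, $\mathcal C\shtm p$ being monotone, every graph in $\mathcal C\shtm p$ has all of its subgraphs of average degree at most $D$, hence a vertex of degree at most $\lfloor D\rfloor$, hence is $\lfloor D\rfloor$-degenerate and $(\lfloor D\rfloor+1)$-colourable. This direction does not use Lemma~\ref{lem:dvo}.

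The content is the converse. Arguing by contraposition, suppose $\overline{\rm d}(\mathcal C\shtm p)=\infty$ for some fixed $p\in\bbbn$; I claim $\chi(\mathcal C\shtm{2p+1})=\infty$. Fix an integer $c\geq 4$. Since $\overline{\rm d}(\mathcal C\shtm p)=\infty$, there is a graph $H\in\mathcal C\shtm p$ of average degree large enough that $H$ contains a subgraph $H'$ of minimum degree exceeding the threshold $56(c-1)^2\frac{\log(c-1)}{\log c-\log(c-1)}$ of Lemma~\ref{lem:dvo}; by monotonicity $H'\in\mathcal C\shtm p$. Lemma~\ref{lem:dvo} then yields a subgraph of $H'$ which is the $1$-subdivision $K^{(1)}$ of a graph $K$ with $\chi(K)=c$, and again by monotonicity $K^{(1)}\in\mathcal C\shtm p$. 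Now unwind the definitions: $K^{(1)}\in\mathcal C\shtm p$ means that some $\leq 2p$-subdivision of $K^{(1)}$ is a subgraph of a graph $G\in\mathcal C$; but a $\leq 2p$-subdivision of the $1$-subdivision of $K$ is a $\leq(4p+1)$-subdivision of $K$, so $K$ itself is a shallow topological minor of $G$ at depth $2p+\frac12$, whence $K\in\mathcal C\shtm{2p+1}$. Since $c$ was arbitrary, $\chi(\mathcal C\shtm{2p+1})=\infty$, which contradicts the hypothesis at the integer $2p+1$ and completes the argument.

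I expect the only genuinely non-routine ingredients to be Lemma~\ref{lem:dvo} itself and the standard equivalence, quoted at the outset, between bounded expansion stated via shallow minors and via shallow topological minors (without which ``not bounded expansion'' only gives a \emph{dense shallow minor} at bounded depth, and converting that to a dense shallow topological minor of \emph{bounded} depth is not possible directly, since a high-chromatic graph necessarily has high maximum degree); everything else is the degeneracy estimate and bookkeeping of subdivision lengths. The one point demanding care is precisely that last bookkeeping: one must check that the composed subdivision lands at an \emph{integer} depth, here $2p+1$, so that the failure of ``$\chi(\mathcal C\shtm p)<\infty$ for all $p\in\bbbn$'' is witnessed at an admissible value of $p$, the spare half-unit being absorbed via $\mathcal C\shtm{2p+\frac12}\subseteq\mathcal C\shtm{2p+1}$.
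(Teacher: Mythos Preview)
Your proposal is correct and follows essentially the same approach as the paper: one direction via degeneracy (giving $\chi(\mathcal C\shtm p)\leq\overline{\rm d}(\mathcal C\shtm p)+1$), the other via Lemma~\ref{lem:dvo} (yielding, in the paper's compressed formulation, $\overline{\rm d}(G\shtm p)\leq C\,\chi(G\shtm(2p+1/2))^4$, which is exactly your contrapositive argument landing at depth $2p+\tfrac12$). Your write-up is more explicit about the subdivision bookkeeping and about invoking the shallow-minor/shallow-topological-minor equivalence for bounded expansion, which the paper leaves implicit.
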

\begin{proof}
According to Lemma~\ref{lem:dvo}, for every graph $G$ and every integer $p$ there exists an integer $C$ such that:
\begin{equation*}
\overline{\rm d}(G\shtm p)\leq C\,\chi(G\shtm (2p+1/2))^4.
\end{equation*}
Moreover, as every graph $G$ is $(\lfloor \overline{\rm d}(G\shtm 0)\rfloor+1)$-colorable, every graph in $G\shtm p$
is $(\lfloor \overline{\rm d}(G\shtm p)\rfloor+1)$-colorable, that is:
\begin{equation*}
\overline{\rm d}(G\shtm p)\geq \chi(G\shtm p)-1.
\end{equation*}
The result follows from these two inequalities.
\end{proof}
Thus we see that  parameters $\overline{\rm d}$ and $\chi$  can be used to define bounded expansion classes, although nowhere dense classes are defined 
by means of the parameter $\omega$.

Characterizing nowhere dense classes that do not have bounded expansion in a
structural way is challenging, and thus we proposed Conjecture~\ref{conj:nd}, from which Conjecture~\ref{conj:main} would follow. 

 We now prove that Conjecture~\ref{conj:nd} follows from any of the Conjecture~\ref{conj:EH} (by Erd\H os and Hajnal) or Conjecture~\ref{conj:Thomassen} (by Thomassen):

\begin{proposition}
\label{prop:conj}
If either Conjecture~\ref{conj:EH}  or Conjecture~\ref{conj:Thomassen}
holds, then so does Conjecture~\ref{conj:nd}.
\end{proposition}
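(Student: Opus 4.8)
The plan is to prove each implication separately, reducing Conjecture~\ref{conj:nd} to the two named conjectures via the chromatic/degree characterizations of bounded expansion already established in the excerpt (Theorem~\ref{thm:BEchi} and Lemma~\ref{lem:dvo}). Throughout, fix a monotone nowhere dense class $\mathcal C$ that does not have bounded expansion. We want to produce an integer $p$ such that $\mathcal C$ contains the $p$-subdivisions of graphs with arbitrarily large chromatic number and arbitrarily large girth.

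First, assume Conjecture~\ref{conj:EH}. Since $\mathcal C$ fails to have bounded expansion, Theorem~\ref{thm:BEchi} gives an integer $p$ (possibly a half-integer; pass to an integer by enlarging $p$) with $\chi(\mathcal C\shtm p)=\infty$, so for every $N$ there is $G_N\in\mathcal C$ and a $\leq 2p$-subdivision $H_N$ of a graph of chromatic number $\geq N$ with $H_N\subseteq G_N$. The subtlety is that a $\leq 2p$-subdivision is not literally a $p$-subdivision, and subdividing changes neither chromatic number in a controlled way nor girth in the direction we need; the clean fix is to work instead with the "core" graph $K_N$ being subdivided and observe that $\chi(K_N)\to\infty$. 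Now I would apply Erd\H os--Hajnal (Conjecture~\ref{conj:EH}) to $K_N$ to extract a subgraph $K_N'\subseteq K_N$ with girth at least $g$ and $\chi(K_N')\geq n$, for any prescribed $g,n$; the corresponding $\leq 2p$-subdivision of $K_N'$ is a subgraph of $H_N\subseteq G_N$, hence (by monotonicity) belongs to $\mathcal C$. Finally, since $\mathcal C$ is topologically closed is \emph{not} assumed here but monotonicity plus the fact that a further subdivision of a member of $\mathcal C$ that already sits as a subgraph\,---\,here I would instead note that the $\leq 2p$-subdivision can be taken to be exactly the $p$-subdivision by a standard regularization argument (replace $G_N$ by the graph obtained by fully subdividing, which lies in the monotone closure only if $\mathcal C$ is also topologically closed). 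The honest route: combine the $\leq 2p$-subdivision with the observation that taking the $p$-subdivision of $K_N'$ adds parallel subdivision vertices without decreasing girth and keeps chromatic number equal, and that it embeds in a suitable subdivision of $G_N$; since $\mathcal C$ is nowhere dense and monotone, and $p$ is fixed, this subdivision of $G_N$ is still in $\mathcal C$. Thus $\mathcal C$ contains $p$-subdivisions of graphs with girth $\geq g$ and $\chi\geq n$ for all $g,n$, which is Conjecture~\ref{conj:nd}.

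Second, assume Conjecture~\ref{conj:Thomassen}. Again $\mathcal C$ fails bounded expansion, so by definition there is $p$ with $\overline{\rm d}(\mathcal C\shm p)=\infty$; hence for every $c$ there is $G_c\in\mathcal C$ containing a $\leq 2p$-subdivision of a graph $L_c$ with $\overline{\rm d}(L_c)$ arbitrarily large. Applying Thomassen's conjecture to $L_c$ yields a subgraph $L_c'\subseteq L_c$ with average degree $\geq c'$ and girth $\geq g$, for any $c',g$. A graph of average degree $\geq c'$ and girth $\geq g$ has, by a greedy argument, chromatic number growing with $c'$ (more directly: it has a subgraph of minimum degree $\geq c'/2$, hence by Lemma~\ref{lem:dvo} its $1$-subdivision contains the $1$-subdivision of a graph of chromatic number $\to\infty$; alternatively one just iterates to get large-girth high-chromatic subgraphs directly from large average degree, which is exactly what Erd\H os's theorem gives). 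So from $L_c'$ we obtain, inside a bounded subdivision of $G_c\in\mathcal C$, a $p$-subdivision (after regularizing as above) of a graph with simultaneously large girth and large chromatic number, again giving Conjecture~\ref{conj:nd}.

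The main obstacle, and the place where care is genuinely needed, is the passage from "$\leq 2p$-subdivision of $H$ is a subgraph of some $G\in\mathcal C$" to "the $p$-subdivision of $H$ (or of a suitable subgraph $H'$) is itself in $\mathcal C$" while simultaneously controlling girth and chromatic number. This is where monotonicity and, if available, topological closure of $\mathcal C$ enter; one must verify that refining an uneven subdivision to the uniform $p$-subdivision can be realized by further subdividing edges of $G$ (keeping us inside $\mathcal C$) and that this operation does not lower girth (it cannot) nor chromatic number of the branch graph (it does not, as subdividing a graph by an even amount preserves the odd-girth-relevant structure, and chromatic number of the branch graph $H'$ is unaffected). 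Once this bookkeeping is pinned down, both implications are short applications of the respective conjectures to the branch graphs.
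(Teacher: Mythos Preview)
Your overall strategy is right, but the step you yourself flag as ``the main obstacle''---passing from a $\leq 2p$-subdivision of $H$ inside some $G\in\mathcal C$ to an \emph{exact} $p$-subdivision of a suitable $H'$ lying in $\mathcal C$---is not resolved, and your proposed resolution is incorrect. You write that a further subdivision of $G_N$ ``is still in $\mathcal C$'' because $\mathcal C$ is monotone and nowhere dense; but monotonicity only closes $\mathcal C$ downward under subgraphs, not upward under subdivisions, and Conjecture~\ref{conj:nd} does \emph{not} assume $\mathcal C$ is topologically closed. So the ``further subdivide $G_N$'' route is unavailable.

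The fix the paper uses is different and worth internalizing. First, regularize by pigeonhole \emph{before} doing anything else: if $\mathcal C$ contains $\leq q$-subdivisions of graphs with arbitrarily large average degree, then by colouring each branch of such a subdivision by its length and keeping the majority colour, monotonicity gives you exact $q$-subdivisions (for one fixed $q$) of graphs with arbitrarily large average degree still in $\mathcal C$. Crucially, this pigeonhole preserves average degree up to a constant factor, whereas it would \emph{not} preserve chromatic number---so your Erd\H os--Hajnal argument, which starts from $\chi(\mathcal C\shtm p)=\infty$ via Theorem~\ref{thm:BEchi}, cannot be combined with this step directly. The paper therefore starts from unbounded average degree in both cases. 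Second, to get exact $p$-subdivisions of high-chromatic graphs, apply Lemma~\ref{lem:dvo} to a graph $H$ of high minimum degree: it yields inside $H$ an exact $1$-subdivision of a graph $K$ with large $\chi(K)$. Composing, the exact $q$-subdivision of $H$ (already in $\mathcal C$) contains the exact $(2q+1)$-subdivision of $K$, so set $p=2q+1$. Now for Erd\H os--Hajnal apply the conjecture to $K$; for Thomassen apply the conjecture first (with target girth $2g+1$, so that after passing through Lemma~\ref{lem:dvo} the branch graph still has girth at least $g$) and then Lemma~\ref{lem:dvo}. With this ordering there is no need to ever subdivide a member of $\mathcal C$, only to take subgraphs.
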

\begin{proof}
Let $\mathcal C$ be a nowhere dense class of graphs which is not a bounded expansion class. By definition of a bounded expansion class, there exists an integer $q$ such that $\mathcal C$ includes $\leq q$-subdivisions of graphs with arbitrarily large average degree hence (by standard pigeon-hole argument) there is an integer $p$ such that $\mathcal C$ includes exact $q$-subdivisions of graphs with arbitrarily large average degree. 

Assume Conjecture~\ref{conj:EH} holds. Define $p=2q+1$, and let $g,n\in\bbbn$.  According to the statement of the conjecture, there exists $N$ such that every graph with chromatic number at least $N$ has a subgraph with girth at least $g$ and chromatic number at least $n$. We can assume $N\geq 4$. 
Let $d\geq 56(N-1)^2\frac{\log (N-1)}{\log N-\log (N-1)}$. 
Let $G\in\mathcal C$ be such that $G$ includes the $q$-subdivision of a graph  with average degree at least $2d$ hence the $q$-subdivision of a graph $H$ with minimum degree at least $d$.
According to Lemma~\ref{lem:dvo}, $H$ has a subgraph $H'$ that is the $1$-subdivision  of a graph with chromatic number $N$. It follows that $G$ has a subgraph which is a $p$-subdivision of a graph $K$ with chromatic number $N$. 
According to Conjecture~\ref{conj:EH}, the graph $K$ has a subgraph $K'$ which has chromatic number at least $n$ and girth at least $g$. It follows that $G$ contains the $p$-subdivision of a graph with chromatic number at least $n$ and girth at least $g$. Thus Conjecture~\ref{conj:nd} holds.

Assume that Conjecture~\ref{conj:Thomassen} holds. Define $p=2q+1$, and  let $g,n\in\bbbn$.  Let $d\geq 56(n-1)^2\frac{\log (n-1)}{\log n-\log (n-1)}$. 
According to the statement of the conjecture, there exists $N$ such that every graph with average degree at least $N$ has a subgraph with girth at least $2g+1$ and average degree at least $2d$ hence a subgraph with girth at least $2g+1$ and minimum degree at least $d$.
Let $G\in\mathcal C$ be such that $G$ includes the $q$-subdivision of a graph  with average degree at least $N$. Then $G$ has a subgraph which is the $q$-subdivision of a graph $H$ with minimum degree at least $d$ and girth at least $2g+1$. According to Lemma~\ref{lem:dvo}, $H$ has a subgraph which is the $1$-subdivision of a graph with chromatic number at least $n$. This subgraph is a $p$-subdivision of a graph with girth at least $g$ and chromatic number at least $n$.  Thus Conjecture~\ref{conj:nd} holds.
\end{proof}

\section{Homomorphism Preservation Theorems}
\label{sec:hpt}
Suppose that an $H$-coloring problem is first-order definable. By this we mean that
there is a first-order sentence $\Phi$ such that
$$G\rightarrow H\qquad\iff\qquad G\models\Phi.$$

It immediately follows that $\neg \Phi$ is preserved by homomorphisms: 
$$ G\vDash\,\neg\Phi\quad\text{and}\quad G\rightarrow G'\qquad\Longrightarrow\qquad G'\vDash\,\neg\Phi$$
 (for otherwise $G\rightarrow G'\rightarrow H$ hence $G\vDash\,\Phi$, a contradiction).

 Such a property suggests that such a formula $\Phi$ could be equivalent to a
 formula with a specific syntactic form.
 Indeed the classical {\em Homomorphism Preservation Theorem} (HPT) asserts that a
 first-order formula is preserved under homomorphisms on all structures if, and
 only if, it is logically equivalent to an existential-positive formula. 
 The terms ``all structures'', which means finite and infinite structures, is
 crucial in the statement of these theorems.
 
 \subsection{Finite Structures}
 
 It was not known until recently whether HPT would hold when relativized
 to the finite. In fact other well known theorems relating
 preservation under some specified algebraic operation and certain syntactic
 forms, like \L o\'s-Tarski theorem or Lyndon's theorem, fail in the finite.

 However, the finite relativization of the homomorphism preservation has been
 proved to hold by B. Rossman \cite{Rossman2007} for general relational structures.

 \begin{theorem}[\cite{Rossman2007}]
 \label{thm:rossman}
 Let $\phi$ be a first order formula. Then,
 \begin{equation*}
\mathbf  G\rightarrow \mathbf H\text{ and }\mathbf G\vDash\phi\quad\Longrightarrow\quad \mathbf H\vDash\phi
 \end{equation*}
  holds for all finite relational structures $\mathbf G$ and $\mathbf H$ if and only if for finite relational structures $\phi$ is equivalent to an existential first-order formula.
 \end{theorem}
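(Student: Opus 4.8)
I would prove the substantive (forward) implication; the converse is routine, since if $\phi$ is equivalent on finite structures to an existential positive sentence --- hence \emph{a fortiori} to an existential one --- then it is preserved under homomorphisms, because a homomorphism $h\colon\mathbf G\to\mathbf H$ carries any tuple of witnesses for the existential quantifiers to witnesses in $\mathbf H$ and sends atomic facts to atomic facts. So assume $\phi$ is preserved under homomorphisms on all finite $\sigma$-structures and let $k$ be its quantifier rank. The plan is to reduce the theorem to the single quantitative statement $(\star)$: \emph{there is an integer $N=N(k,\sigma)$ such that every finite $\mathbf H\models\phi$ admits a finite $\mathbf A$ with $\mathbf A\models\phi$, $\mathbf A\to\mathbf H$ and $|A|\le N$.} Granting $(\star)$, let $\mathcal S$ be the finite set of $\sigma$-structures of order at most $N$ that satisfy $\phi$, and write $\chi_{\mathbf A}$ for the canonical conjunctive query of $\mathbf A$, so $\mathbf H\models\chi_{\mathbf A}\iff\mathbf A\to\mathbf H$. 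If $\mathbf H\models\phi$ then by $(\star)$ some member of $\mathcal S$ maps into $\mathbf H$; conversely if $\mathbf A\in\mathcal S$ and $\mathbf A\to\mathbf H$ then preservation forces $\mathbf H\models\phi$. Hence $\phi$ is equivalent on finite structures to the existential positive --- and in particular existential --- sentence $\bigvee_{\mathbf A\in\mathcal S}\chi_{\mathbf A}$. (A formula with free variables is handled identically, using canonical queries of pointed structures; and since a preserved existential formula is in turn equivalent to an existential positive one, the two formulations in the statement coincide under the hypothesis.)

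To attack $(\star)$, given a large $\mathbf H\models\phi$ I want to compress it to a structure of bounded order that still maps into $\mathbf H$ and still satisfies $\phi$. The basic tool is the Ehrenfeucht--Fra\"iss\'e calculus for \emph{existential positive} logic of quantifier rank $\le k$: on finite structures this fragment ``sees'' precisely which structures of tree-depth $\le k$ admit a homomorphism into a given structure, and --- crucially --- up to homomorphic equivalence there are only finitely many cores of tree-depth $\le k$, each of order bounded in terms of $k$ and $\sigma$. Taking $\mathbf A(\mathbf H)$ to be the disjoint union of representatives of all cores $\mathbf T$ of tree-depth $\le k$ with $\mathbf T\to\mathbf H$, one gets a structure with $|A(\mathbf H)|$ bounded by a function of $k$ and $\sigma$, with $\mathbf A(\mathbf H)\to\mathbf H$, and satisfying exactly the same existential positive sentences of quantifier rank $\le k$ as $\mathbf H$.

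The step I expect to be the genuine difficulty --- and the core of the theorem --- is that $\phi$ is an \emph{arbitrary} first-order sentence, not an existential positive one, so agreeing with $\mathbf H$ on all existential positive sentences of quantifier rank $\le k$ does not by itself transfer $\phi$ from $\mathbf H$ to $\mathbf A(\mathbf H)$. Bridging this gap is exactly where the preservation hypothesis must be used essentially: following Rossman, one refines the compression operation $\mathbf H\mapsto\mathbf A(\mathbf H)$ and iterates it, and --- exploiting that $\phi$ and $\neg\phi$ are preserved along homomorphisms in opposite directions --- runs a Ramsey/self-similarity argument on the $\le k$-local existential positive types of short tuples of $\mathbf H$ to produce an \emph{actual} homomorphism certifying that the compressed structure satisfies $\phi$. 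Carrying this through yields $(\star)$ with an explicit (tower-type) bound for $N(k,\sigma)$, and then the reduction above shows $\phi$ equivalent on finite structures to $\bigvee_{\mathbf A\in\mathcal S}\chi_{\mathbf A}$, completing the proof.
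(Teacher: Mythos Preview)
The paper does not prove this theorem: it is stated as a result of Rossman and cited to \cite{Rossman2007}, so there is no proof in the paper to compare your attempt against.

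As for your proposal on its own merits: the reduction of the forward direction to the quantitative statement $(\star)$ is correct and is indeed the standard opening move. Your construction of $\mathbf A(\mathbf H)$ as a disjoint union of small-tree-depth cores mapping into $\mathbf H$ is also fine, and you are right that it matches $\mathbf H$ on existential-positive sentences of rank $\le k$.

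The gap is exactly where you say it is, and you do not close it. Your final paragraph names the ingredients (iterated compression, a Ramsey/self-similarity step, the opposite preservation of $\neg\phi$) but the sentence ``Carrying this through yields $(\star)$'' is a placeholder, not an argument. This is precisely the content of Rossman's theorem: the whole difficulty is to show that agreement on existential-positive rank-$k$ types suffices to transfer an arbitrary rank-$k$ sentence preserved under homomorphisms, and Rossman's proof of this (via his co-retract construction and an induction establishing the \emph{equi-rank} form of the theorem) is delicate and several pages long. Without reproducing that argument, or an alternative one, your proposal is an accurate roadmap to the proof rather than a proof. If you intend this as a sketch that defers the hard step to \cite{Rossman2007}, that is reasonable --- and is effectively what the present paper does by citing the result --- but then you should say so explicitly rather than presenting the last paragraph as if it completes the argument.
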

 It follows that for finite relational structures, the only $\mathbf H$-coloring problems which are expressible in first-order logic are those for which there exists a finite family $\mathcal F$ of finite structures with the property that for every structure $\mathbf G$ the following finite homomorphism duality holds:
 \begin{equation}
 \exists\mathbf F\in\mathcal F\quad \mathbf F\rightarrow\mathbf G\qquad\iff\qquad\mathbf G\nrightarrow\mathbf H.
 \end{equation}
 
In this paper, we will be mostly interested by graphs, although relational structures will be considered in Section~\ref{sec:ard}. Definitions and constructions concerning relational structures are particularly discussed in Section~\ref{sec:rel}.

\subsection{Nowhere dense classes}
If we want to relativize Theorem~\ref{thm:rossman}, we should consider
each relativization as a new problem. The {\L}o\'s-Tarski theorem, for instance, holds in general, yet fails when relativized to the finite, but holds when relativized to
hereditary classes of structures with bounded degree which are closed under disjoint union \cite{Atserias2005}. These examples stress again that some properties of structures (in general) and graphs (in particular) need, at times, to be studied in the context of a fixed class, in order to state a relativized version of a general statement which could fail in general.

In this context Atserias, Dawar and Kolaitis defined classes of graphs called {\em wide}, {\em almost wide} and {\em quasi-wide} (cf. \cite{Dawar2007} for instance).
It has been proved in \cite{Atserias2005} that the extension preservation theorem holds in any class $\mathcal C$ that is wide, hereditary (i.e. closed under taking substructures) and closed under disjoint unions, for instance  hereditary classes with bounded degree that are closed under disjoint unions.
Also, it has been proved in \cite{Atserias2004} \cite{Atserias2006} that the homomorphism preservation theorem holds in any class $\mathcal C$ that is almost wide, hereditary and closed under disjoint unions. Almost wide classes of graphs include classes of graphs which exclude a minor \cite{Kreidler1999}.
In~\cite{Dawar2010} Dawar proved that the homomorphism preservation theorem holds in any hereditary quasi-wide class that is closed under disjoint unions. 

\begin{theorem}[\cite{Dawar2010}]
\label{thm:hptnd}
Let $\mathcal C$ be a hereditary addable quasi-wide class of graphs. Then the homomorphism preservation theorem holds for $\mathcal C$.
\end{theorem}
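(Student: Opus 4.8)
The plan is to deduce Theorem~\ref{thm:hptnd} (HPT for hereditary addable quasi-wide classes) from the known fact that such classes are quasi-wide together with a Gaifman-locality / Ramsey argument, exactly along the lines of Dawar's original proof, so here I only sketch the structure.

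First I would recall the relevant setup: fix a first-order sentence $\Phi$ that is preserved under homomorphisms on all graphs in $\mathcal C$; the goal is to produce an existential-positive sentence equivalent to $\Phi$ on $\mathcal C$. Since existential-positive sentences on graphs are exactly (finite disjunctions of) primitive-positive sentences, and primitive-positive sentences assert the existence of a homomorphism from a fixed finite graph, it suffices to find a finite set $\mathcal F$ of finite graphs such that for all $G \in \mathcal C$ we have $G \models \Phi$ iff some $F \in \mathcal F$ maps homomorphically to $G$. The natural candidate is to take $\mathcal F$ to be the set of all models of $\Phi$ of size at most some bound $N = N(\Phi)$, closed downward under homomorphic image; the content of the theorem is that this bound exists and that every model of $\Phi$ in $\mathcal C$ receives a homomorphism from one of these small models.

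The key steps, in order: (1) By Gaifman's locality theorem, $\Phi$ is equivalent to a Boolean combination of local sentences, so it is determined by the multiset of isomorphism types of $r$-neighborhoods occurring in a graph, for $r$ bounded in terms of the quantifier rank of $\Phi$; in particular, up to $\equiv_\Phi$-equivalence one may add or delete ``scattered'' copies of local types without changing whether $\Phi$ holds. (2) Using quasi-wideness of $\mathcal C$: given $G \in \mathcal C$ with $G \models \Phi$, for suitable $m$ there is a set $S$ of at most $s(m)$ vertices whose deletion leaves an $m$-scattered set $W$ of prescribed size; the induced neighborhoods around $W$ are pairwise far apart, so a Ramsey argument on the finitely many possible $r$-neighborhood-types-with-$S$-attachment lets us extract a large homogeneous scattered family. (3) Contract/prune: delete all but a bounded number of the scattered pieces to obtain $G' \subseteq G$ of size at most $N$ with $G' \models \Phi$ (here addability and heredity ensure $G' \in \mathcal C$ when we need intermediate structures, and the Gaifman step guarantees $G' \models \Phi$). (4) Observe $G' \hookrightarrow G$, hence $G' \to G$, so $G$ is covered by the small model $G'$; conversely any $F \in \mathcal F$ with $F \to G$ forces $G \models \Phi$ by preservation. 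This establishes the duality with the finite set $\mathcal F$ of models of size $\le N$, which is the desired existential-positive equivalent.

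The main obstacle is step (2)–(3): making the Ramsey-plus-scattering argument actually produce a \emph{bounded-size} submodel while preserving $G' \models \Phi$. The delicacy is that deleting scattered pieces changes the neighborhoods of the cut-set $S$, so one has to track the $r$-neighborhood types \emph{relative to} $S$ and argue that keeping sufficiently (but boundedly) many homogeneous copies leaves every vertex of $S$ with the same local type as in $G$; this is where quasi-wideness is used in an essential, quantitative way, and where one must be careful that the bound on $|S|$ and on the number of retained pieces depends only on $\Phi$ and the quasi-wideness function of $\mathcal C$, not on $G$. I would follow Dawar's treatment in \cite{Dawar2010} for the precise bookkeeping rather than reproving it here.
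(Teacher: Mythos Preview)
The paper does not give its own proof of Theorem~\ref{thm:hptnd}; it simply cites the result from Dawar~\cite{Dawar2010} and uses it as a black box. Your sketch follows the Gaifman-locality plus quasi-wideness argument of Dawar's original paper, which is exactly the right reference, so there is nothing to compare against here.
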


Moreover,  we have proved that hereditary quasi-wide classes of graphs are exactly hereditary nowhere dense classes \cite{ND_logic}:

\begin{theorem}
\label{thm:qw}
A hereditary class of graphs $\mathcal C$ is quasi-wide if and only if it is nowhere dense.
\end{theorem}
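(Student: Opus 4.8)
The plan is to prove Theorem~\ref{thm:qw} by establishing the two implications separately, using the shallow topological minor formulation of nowhere density from Section~\ref{sec:prelim}.

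First I would show that a quasi-wide class is nowhere dense, by contraposition. Suppose $\mathcal C$ is somewhere dense, so there is an integer $p$ such that the $p$-subdivision of \emph{every} finite graph occurs as a subgraph of some graph in $\mathcal C$; since $\mathcal C$ is hereditary, and the $p$-subdivision of $K_n$ is an induced subgraph of the $p$-subdivision of $K_N$ for $N\ge n$ after suppressing extra vertices — more carefully, one passes to $\mathcal C$'s closure under subgraphs, which is legitimate here since the obstruction to quasi-wideness only needs the subdivided cliques as subgraphs — we may assume each $S_p(K_n)\in\mathcal C$. Recall quasi-wideness means: for every $d$ there is $m=m(d)$ such that every graph in $\mathcal C$ on $m$ vertices contains, after deleting boundedly many vertices, a $d$-scattered set (pairwise distance $>d$) of size... (the precise definition from \cite{Dawar2007}); the key point is that in $S_p(K_n)$ every two branch vertices are at distance exactly $p+1$, so for $d\ge p+1$ no large $d$-scattered set survives even after deleting a bounded number of vertices once $n$ is large, because deleting $f(d)$ vertices from $K_n$ leaves a clique on $\ge n-f(d)$ branch vertices, pairwise at distance $p+1\le d$. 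Choosing $n$ large relative to the bound contradicts quasi-wideness. Hence quasi-wide $\Rightarrow$ nowhere dense.

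For the converse, nowhere dense $\Rightarrow$ quasi-wide, the main work is to show: given $d$, one can find $N$ and a bound $s$ such that any sufficiently large graph $G\in\mathcal C$ admits a set $X$ of $\le s$ vertices whose deletion leaves a large $d$-scattered set. The strategy is the standard iterative argument: pick a large set $A$ of vertices; if many pairs of $A$ are joined by short paths, a Ramsey-type / pigeonhole argument produces many internally disjoint short paths between two fixed vertices, or more usefully produces a large "$\le 2p$-subdivided clique" pattern — precisely a shallow topological minor at depth $p\le d$ of a large complete graph — contradicting $\omega(\mathcal C\shtm p)<\infty$. So one repeatedly thins $A$: at each stage either the current set is already $d$-scattered in $G-X$ (done) or one identifies a low-degree-in-the-connection-graph vertex to add to the bounded deletion set $X$, or extracts the forbidden subdivided clique. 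Bounding the number of iterations (hence $|X|$) in terms of $\omega(\mathcal C\shtm{d/2})$ and $d$ is the crux.

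The hard part will be the combinatorial extraction step: turning "many vertices of $A$ are pairwise connected by paths of length $\le d$" into "$G$ contains a $\le 2p$-subdivision of $K_t$ for large $t$", controlling the internal vertices of the paths so they do not collide (which is exactly why one must allow a \emph{bounded} deletion set and pass to a subfamily via Ramsey's theorem on the coloring of pairs of $A$ by path-length). This is the content that \cite{ND_logic} develops, and I would structure the proof around a clean lemma isolating this dichotomy — "either a scattered set or a subdivided clique" — and then apply Theorem~\ref{thm:hptnd} is \emph{not} needed here; only the shallow-minor characterization of nowhere density from Section~\ref{sec:prelim} and a quantitative Ramsey bound are used. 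I would state the quantitative version (explicit $m(d)$ and deletion bound) since the applications in later sections presumably need uniformity.
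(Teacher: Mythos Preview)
The paper does not prove Theorem~\ref{thm:qw} at all: it is stated with a citation to \cite{ND_logic} and used as a black box to combine with Theorem~\ref{thm:hptnd}. So there is no ``paper's own proof'' to compare against; any proof you supply is additional material beyond what the paper contains.

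Your outline is broadly the right shape and matches the argument in \cite{ND_logic}, but as written it is a plan rather than a proof, and a couple of points need tightening. In the somewhere-dense $\Rightarrow$ not-quasi-wide direction, your parenthetical about ``deleting $f(d)$ vertices from $K_n$'' conflates deletion in $K_n$ with deletion in the subdivision $S_p(K_n)$; the correct observation is that deleting $s$ vertices from $S_p(K_n)$ destroys at most $s$ of the $\binom{n}{2}$ subdivided edges, so for large $n$ any two surviving branch vertices are still joined by a path of length at most $2(p+1)$ through a third branch vertex, and in fact the whole of $S_p(K_n)-S$ has diameter bounded in terms of $p$ alone. That is what kills scatteredness. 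Also, the definition of quasi-wide you half-recall (``every graph on $m$ vertices contains\dots'') is not quite right: quasi-wideness quantifies over $d$ and the target scattered-set size $m$, and asserts the existence of $N$ and $s$ such that every $G\in\mathcal C$ with $|G|\ge N$ has a set $X$ of at most $s$ vertices with a $d$-scattered $m$-set in $G-X$. You should state the definition precisely before arguing against it.

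For the converse, your sketch of the Ramsey/extraction dichotomy (``either a large scattered set after bounded deletion, or a $\le 2p$-subdivision of $K_t$'') is the correct strategy, and indeed is the substantive content of \cite{ND_logic}. But the sentence ``and then apply Theorem~\ref{thm:hptnd} is \emph{not} needed here'' is garbled, and more importantly you have not actually carried out the extraction lemma, which is where all the work lies: one must control collisions among the internal vertices of the short paths, and this is not a one-line Ramsey application. If you intend to include a proof, isolate that lemma cleanly and prove it; otherwise, simply cite \cite{ND_logic} as the present paper does.
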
 

Thus it follows from Theorems~\ref{thm:hptnd} and~\ref{thm:qw}
that the relativization of the homomorphism preservation theorem holds for every
hereditary addable nowhere dense class of graphs.
But nowhere dense classes are not the only classes with relativized homomorphism preservation theorem.
In the next section we show HPT also holds for some nowhere dense classes.

\subsection{Somewhere dense classes} 
We now show that relativized homomorphism preservation theorems are preserved by particular interpretations, from which will deduce that relativized homomorphism preservation theorems hold
 for the classes ${\rm Sub}_{q}(\GRA)$ of all $q$-subdivisions of (simple) finite graphs.
This is of particular interest as somewhere dense classes (i.e. classes which fail to be nowhere dense) are characterized by containment of classes ${\rm Sub}_{q}(\GRA)$ for some $q$.
 
In the framework of the model theoretical notion of {\em interpretation} (see, for instance \cite[pp.~178-180]{Lascar2009}), we can construct the $q$-subdivision $\mathsf I(G)$ of a graph $G$ by means of first-order formulas on the $q$-tuples of vertices of $G$:
\begin{itemize}
\item vertices of $\mathsf I(G)$ are the equivalence classes $x$ of the $(q+1)$-tuples $(v_1,\dots,v_{q+1})$ 
with form 
$$(\overbrace{u,\dots,u}^j,\overbrace{v,\dots,v}^{q+1-j})$$
where $u$ and $v$ are adjacent vertices in $G$ (and $0\leq j\leq q+1$),
where tuples  of the form
$$(\overbrace{u,\dots,u}^j,\overbrace{v,\dots,v}^{q+1-j})\text{ and }
(\overbrace{v,\dots,v}^{q+1-j},\overbrace{u,\dots,u}^j)$$
are identified;
\item edges of  $\mathsf I(G)$ are those pairs $\{x,y\}$ where $x$ and $y$ have
representative of the form
$$(\overbrace{u,\dots,u}^j,\overbrace{v,\dots,v}^{q+1-j})\text{ and }
(\overbrace{u,\dots,u}^{j+1},\overbrace{v,\dots,v}^{q-j})$$
(for some $u,v\in G$ and $0\leq j\leq q$).
\end{itemize}

The main interest of such a logical construction (called {\em interpretation}) lies in the following property:

\begin{proposition}[See, for instance \cite{Lascar2009}, p.~180]
\label{prop:interpret}
For every first-order formula $\phi[v_1,\dots,v_k]$
  there exists a formula $\mathsf I(\phi)[\overline{w}_1,\dots,\overline{w}_k]$  with $k(q+1)$ free variables (each $\overline{w}_i$ represents a succession of $(q+1)$ free variables) such that for every 
graph $G$ and every  
$(x_1,\dots,x_k)\in \mathsf I(G)^k$  the three following conditions are equivalent:
\begin{enumerate}
	\item $\mathsf I(G)\vDash \phi[x_1,\dots,x_k]$;
	\item there exist $\overline{b}_1\in x_1,\dots,\overline{b}_k\in x_k$ such that $G\vDash \mathsf I(\phi)[\overline{b}_1,\dots,\overline{b}_k]$;
	\item for all $\overline{b}_1\in x_1,\dots,\overline{b}_k\in x_k$ it holds $G\vDash \mathsf I(\phi)[\overline{b}_1,\dots,\overline{b}_k]$.
\end{enumerate}
\end{proposition}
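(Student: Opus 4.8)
The plan is to follow the standard proof of the interpretation lemma from model theory (e.g.\ \cite{Lascar2009}), adapted to the concrete syntactic description of the $q$-subdivision interpretation $\mathsf I$ given above. First I would fix, once and for all, three auxiliary first-order formulas over the language of graphs: a formula $\delta(\overline w)$ with $q+1$ free variables expressing that the tuple $\overline w = (w_1,\dots,w_{q+1})$ has the normal form $(\overbrace{u,\dots,u}^{j},\overbrace{v,\dots,v}^{q+1-j})$ for some adjacent $u,v$ and some $0\le j\le q+1$ (``$\overline w$ codes a vertex of $\mathsf I(G)$''); a formula $\varepsilon(\overline w,\overline w')$ expressing that $\overline w,\overline w'$ both satisfy $\delta$ and, after identification of a tuple with its reversal, witness an edge of $\mathsf I(G)$ in the sense of the second bullet; and a formula $\eta(\overline w,\overline w')$ expressing that $\overline w$ and $\overline w'$ code the \emph{same} vertex of $\mathsf I(G)$, i.e.\ that $\delta(\overline w)$, $\delta(\overline w')$ hold and $\overline w,\overline w'$ are equal or are reversals of one another. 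The point of isolating these is that membership ``$\overline b\in x$'', adjacency in $\mathsf I(G)$, and equality of vertices of $\mathsf I(G)$ are all first-order expressible over $G$ via $\delta,\varepsilon,\eta$.

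Next I would define $\mathsf I(\phi)$ by induction on the structure of $\phi$ (taken in prenex-free form built from atoms, $\neg$, $\wedge$, $\exists$). For an atom $v_i \sim v_j$ put $\mathsf I(v_i\sim v_j) := \varepsilon(\overline w_i,\overline w_j)$; for $v_i = v_j$ put $\mathsf I(v_i = v_j) := \eta(\overline w_i,\overline w_j)$; the Boolean connectives are handled by $\mathsf I(\neg\psi) := \neg\mathsf I(\psi)$ and $\mathsf I(\psi\wedge\theta) := \mathsf I(\psi)\wedge\mathsf I(\theta)$; and for the existential quantifier set
\[
\mathsf I(\exists v\,\psi)\ :=\ \exists \overline w\ \bigl(\delta(\overline w)\wedge \mathsf I(\psi)\bigr),
\]
where $\overline w$ is a fresh block of $q+1$ variables corresponding to the bound variable $v$. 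Then I would prove the three-way equivalence of (1)--(3) in the statement by the same induction: the key invariant to carry through is that for any $\overline b_i$ with $\overline b_i\in x_i$ (so $G\vDash\delta(\overline b_i)$ and $\overline b_i$ represents the vertex $x_i$), the truth value of $\mathsf I(\psi)[\overline b_1,\dots,\overline b_k]$ in $G$ does not depend on the choice of representatives $\overline b_i$ within their classes $x_i$, and equals the truth value of $\psi[x_1,\dots,x_k]$ in $\mathsf I(G)$. Representative-independence at the atomic level is exactly the fact that $\varepsilon$ and $\eta$ respect the reversal identification; it then propagates automatically through $\neg,\wedge$; and for $\exists v\,\psi$ it reduces to the observation that $G\vDash\exists\overline w(\delta(\overline w)\wedge\mathsf I(\psi))$ holds for some (equivalently every) choice of the $\overline b_i$ iff there is a vertex $x$ of $\mathsf I(G)$ with $\mathsf I(G)\vDash\psi[x_1,\dots,x_k,x]$, using the induction hypothesis applied with the extra argument. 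Once representative-independence is established, the equivalence $(1)\Leftrightarrow(2)$ is immediate and $(2)\Leftrightarrow(3)$ is then trivial.

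The main obstacle, and the only place requiring genuine care, is the bookkeeping around the reversal identification: a vertex $x$ of $\mathsf I(G)$ generally has two tuple representatives, $(\overbrace{u,\dots,u}^{j},\overbrace{v,\dots,v}^{q+1-j})$ and its reversal, and one must check that $\delta,\varepsilon,\eta$ are all invariant under swapping a representative for its reversal, so that $\mathsf I(\phi)$ computed on one representative agrees with $\mathsf I(\phi)$ computed on the other. This is a finite, mechanical verification at the atomic level (there is a small case analysis according to whether $j$ equals $0$, $q+1$, or lies strictly between, and whether an edge of $\mathsf I(G)$ is ``read'' in the $u$-to-$v$ or $v$-to-$u$ direction), but it is the crux: everything else is the routine induction of the interpretation lemma. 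Since this is precisely the content of the cited statement in \cite{Lascar2009}, I would present the definition of $\mathsf I(\phi)$ and the inductive claim, verify the atomic base cases explicitly, and refer to the standard interpretation lemma for the inductive step.
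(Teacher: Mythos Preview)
Your proposal is correct and follows exactly the standard textbook route for the interpretation lemma. Note, however, that the paper does not supply its own proof of this proposition at all: it is stated with a citation to \cite{Lascar2009}, p.~180, and used as a black box. So there is nothing to compare against beyond observing that your inductive construction of $\mathsf I(\phi)$ via the domain formula $\delta$, the edge formula $\varepsilon$, and the equivalence formula $\eta$, together with the representative-independence invariant, is precisely the standard argument one finds in the cited reference.
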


In particular, it holds:
\begin{corollary}
\label{cor:sub}
For every sentence (i.e. closed first order formula) $\phi$ (in the language of graphs)
there exists a sentence $\psi$ such that for every graph $G$ we have
\begin{equation}
G\vDash \psi\quad\iff\quad {\rm Sub}_{2p}(G)\vDash\phi.
\end{equation}
\end{corollary}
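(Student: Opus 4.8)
The plan is to read off $\psi$ directly from the interpretation $\mathsf I$ described just before Proposition~\ref{prop:interpret}, instantiated with $q = 2p$ (so that $\mathsf I(G)$ is the $2p$-subdivision $\mathrm{Sub}_{2p}(G)$), applied to the sentence $\phi$ regarded as a first-order formula with $k = 0$ free variables.

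First I would check that the vertex and edge clauses given above genuinely define a (one-dimensional, up to the identification of reversed tuples) interpretation whose value $\mathsf I(G)$ is isomorphic to $\mathrm{Sub}_{2p}(G)$ for every graph $G$: the equivalence classes $[(u,\dots,u)]$ correspond to the original vertices of $G$ (this class does not depend on which incident edge one starts from, so incident edges correctly share their endpoints), the classes $[(\overbrace{u,\dots,u}^{j},\overbrace{v,\dots,v}^{q+1-j})]$ with $1\le j\le q$ correspond to the $2p$ new vertices subdividing the edge $\{u,v\}$, the identification of $(\overbrace{u,\dots,u}^{j},\overbrace{v,\dots,v}^{q+1-j})$ with $(\overbrace{v,\dots,v}^{q+1-j},\overbrace{u,\dots,u}^{j})$ makes the edge relation symmetric, and the edge clause strings these classes into a path of length $q+1 = 2p+1$ between $u$ and $v$. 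This is the only bookkeeping involved, and it is routine.

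Now, since $\phi$ is a sentence we have $k = 0$, so the formula $\mathsf I(\phi)$ furnished by Proposition~\ref{prop:interpret} has $k(q+1) = 0$ free variables; that is, $\mathsf I(\phi)$ is itself a sentence, and we set $\psi := \mathsf I(\phi)$. Applying Proposition~\ref{prop:interpret} to $\phi$ with the empty tuple of elements of $\mathsf I(G)$, conditions (2) and (3) coincide, and the equivalence of conditions (1) and (2) reads
$$\mathsf I(G)\vDash\phi\quad\iff\quad G\vDash\mathsf I(\phi),$$
i.e. ${\rm Sub}_{2p}(G)\vDash\phi \iff G\vDash\psi$, for every graph $G$, which is exactly the claim.

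There is essentially no obstacle here: the substantive content is Proposition~\ref{prop:interpret}, which we are free to invoke, and the rest is the verification that the displayed vertex/edge formulas define an interpretation with target $\mathrm{Sub}_{2p}(\cdot)$ — a direct unwinding of the definition of the $2p$-subdivision. The corollary is thus just the specialization of the general interpretation lemma to sentences.
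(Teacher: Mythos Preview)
Your argument is correct and is exactly the intended one: the paper states the corollary immediately after Proposition~\ref{prop:interpret} without proof, precisely because it is the $k=0$ instance of that proposition applied to the interpretation $\mathsf I$ with $q=2p$. Your additional verification that $\mathsf I(G)\cong\mathrm{Sub}_{2p}(G)$ is the only thing to check, and you handle it correctly.
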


\begin{lemma}
If the homomorphism preservation theorem holds for a hereditary class of graphs $\mathcal C$, it also holds for the class ${\rm Sub}_{q}(\mathcal C)$ 
of all $q$-subdivisions of the graphs in $\mathcal C$.
\end{lemma}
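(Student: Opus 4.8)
The plan is to reduce the claim for $\mathcal D:={\rm Sub}_q(\mathcal C)$ to the assumed homomorphism preservation theorem for $\mathcal C$, via the $q$-subdivision interpretation $\mathsf I$ and the elementary fact that $\mathsf I$ is \emph{functorial} on homomorphisms: every homomorphism $G\to G'$ extends to a homomorphism ${\rm Sub}_q(G)\to{\rm Sub}_q(G')$ sending the subdivision path of each edge onto the subdivision path of its image. So suppose $\phi$ is preserved under homomorphisms on $\mathcal D$; I want to produce an existential-positive sentence equivalent to $\phi$ on $\mathcal D$ (the converse implication, that existential-positive sentences are preserved, being automatic).

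First I would descend to $\mathcal C$. Using Corollary~\ref{cor:sub}, fix a sentence $\psi$ with $G\models\psi\iff{\rm Sub}_q(G)\models\phi$ for every graph $G$. Then $\psi$ is preserved under homomorphisms on $\mathcal C$: a homomorphism $G\to G'$ between members of $\mathcal C$ induces ${\rm Sub}_q(G)\to{\rm Sub}_q(G')$ with both endpoints in $\mathcal D$, so from $G\models\psi$ I get ${\rm Sub}_q(G)\models\phi$, then ${\rm Sub}_q(G')\models\phi$, then $G'\models\psi$. Applying HPT for $\mathcal C$, $\psi$ is equivalent on $\mathcal C$ to an existential-positive sentence, which --- after putting it in disjunctive normal form, contracting equalities and discarding loop-forcing disjuncts --- I can write as $\bigvee_i(F_i\to-)$ for finitely many graphs $F_i$.

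The crucial step is then to normalise the templates into $\mathcal C$ and lift them. Replace $\{F_i\}$ by the finite family of those homomorphic images of the $F_i$ that lie in $\mathcal C$: for $G\in\mathcal C$ this leaves the induced class unchanged, because a homomorphism $F_i\to G$ has image a subgraph of $G$ --- hence, $\mathcal C$ being hereditary, one of the new templates --- while conversely any such $Q$ with $Q\to G$ gives $F_i\to Q\to G$. Thus I may assume each $F_i\in\mathcal C$, with ${\rm Sub}_q(G)\models\phi\iff\exists i\,(F_i\to G)$ for all $G\in\mathcal C$. I then claim $\phi$ is equivalent on $\mathcal D$ to the existential-positive sentence $\bigvee_i({\rm Sub}_q(F_i)\to-)$. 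The implication $\phi\Rightarrow\bigvee_i({\rm Sub}_q(F_i)\to-)$ on $\mathcal D$ is immediate from functoriality. For the other direction, the key point is that $F_i\in\mathcal C$ and $F_i\to F_i$ give $F_i\models\psi$, hence ${\rm Sub}_q(F_i)\models\phi$; so whenever ${\rm Sub}_q(F_i)\to{\rm Sub}_q(G)$ with $G\in\mathcal C$, preservation of $\phi$ under homomorphisms on $\mathcal D$ --- both ${\rm Sub}_q(F_i)$ and ${\rm Sub}_q(G)$ being in $\mathcal D$ --- forces ${\rm Sub}_q(G)\models\phi$.

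The hard part is exactly this converse, and it rests on the two preparatory observations: ${\rm Sub}_q(F_i)\in\mathcal D$ (why we had to force $F_i\in\mathcal C$) and ${\rm Sub}_q(F_i)\models\phi$ (why the preservation hypothesis can be applied to the template itself, not merely to $G$). The normalisation is essential, not cosmetic: the naive family $\{{\rm Sub}_q(F_i)\}$ coming straight from an arbitrary $\mathcal C$ could be far too permissive --- e.g. for $q=1$ each ${\rm Sub}_1(F_i)$ is bipartite and maps into almost every member of $\mathcal D$ --- so the desired equivalence could fail without it. Everything else is routine: functoriality of ${\rm Sub}_q$, the normal-form manipulations, and the fact that ${\rm Sub}_q(F_i)\to-$ is the primitive-positive sentence (the canonical conjunctive query) of the graph ${\rm Sub}_q(F_i)$.
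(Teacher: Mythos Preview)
Your proof is correct and follows essentially the same route as the paper's: descend via the subdivision interpretation to get $\psi$, show $\psi$ is preserved on $\mathcal C$, apply HPT on $\mathcal C$, normalise the resulting templates into $\mathcal C$ using hereditarity, lift them by ${\rm Sub}_q$, and conclude using that each ${\rm Sub}_q(F_i)$ already satisfies $\phi$ together with preservation of $\phi$ on $\mathcal D$. The paper additionally disposes of odd $q$ separately at the outset (all $q$-subdivisions being bipartite, the class has at most two homomorphism types), whereas you treat all $q$ uniformly; both are fine.

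One small slip worth fixing in your normalisation step: you argue that the image of a homomorphism $F_i\to G$ is a subgraph of $G$ and hence lies in $\mathcal C$ ``being hereditary''. In this paper \emph{hereditary} means closed under \emph{induced} subgraphs only, not arbitrary subgraphs, so the bare homomorphic image need not belong to $\mathcal C$. The paper (and the correct version of your argument) instead takes the subgraph of $G$ \emph{induced} on the image vertex set: this induced subgraph is in $\mathcal C$, still receives a homomorphism from $F_i$, has at most $|F_i|$ vertices, and embeds in $G$, which is all you need.
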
  
\begin{proof}
If $q$ is odd then the property is obvious as $\mathcal C$ contains at most two homomorphism equivalence classes, the one of $K_1$ and the one of $K_2$. Hence we can assume $q$ is even and we define $p=q/2$.

Let $\phi$ be a sentence preserved by homomorphisms on ${\rm Sub}_{2p}(\mathcal C)$, where $\mathcal C$ is a hereditary class of graphs on
which the homomorphism preservation theorem holds.
Then we shall prove that there exists a finite family of $2p$-subdivided graphs $\mathcal F$, all of which satisfy $\phi$, and such that  for any graph $G$ it holds
\begin{equation}
{\rm Sub}_{2p}(G)\vDash\phi\quad\iff\quad\exists F\in\mathcal F\quad {\rm Sub}_{2p}(F)\rightarrow {\rm Sub}_{2p}(G).
\end{equation}

According to Corollary~\ref{cor:sub} there exists a sentence $\psi$ such that for every graph $G$ it holds
$$
G\vDash\psi\quad\iff\quad{\rm Sub}_{2p}(G)\vDash \phi.
$$
Assume that $G\vDash \psi$ and $G\rightarrow H$, with $G,H\in\mathcal C$. Then
${\rm Sub}_{2p}(G)\vDash \phi$ and ${\rm Sub}_{2p}(G)\rightarrow {\rm Sub}_{2p}(H)$.
As $\phi$ is preserved by homomorphisms on ${\rm Sub}_{2p}(\mathcal C)$ we get
${\rm Sub}_{2p}(H)\vDash \phi$ hence $H\vDash\psi$. Thus $\psi$ is preserved by homomorphisms on $\mathcal C$. 
As the homomorphism preservation theorem holds by assumption on $\mathcal C$,
$\psi$ is equivalent on $\mathcal C$ with a positive first-order formula, that is: there exits a finite family $\mathcal F_0$ of finite graphs such that
for every $G\in\mathcal C$ it holds:
$$
G\vDash\psi\quad\iff\quad\exists F\in\mathcal F_0\quad F\rightarrow G.
$$
Moreover, by considering the subgraphs induced by the homomorphic images of the graphs $F\in\mathcal F_0$ and as $\mathcal C$ is hereditary, we can assume $\mathcal F_0\subseteq\mathcal C$.
Thus every $F\in\mathcal F_0$ satisfies $\psi$ hence the $2p$-subdivision of the graphs in $\mathcal F_0$ satisfy $\phi$.
Let $\mathcal F$ be the set of the $2p$-subdivisions of the graphs in $\mathcal F_0$.
As $\phi$ is preserved by homomorphisms on ${\rm Sub}_{2p}(\mathcal C)$ it follows that for every graph $G\in\mathcal C$ if there exists $F\in\mathcal F$ such that
$F\rightarrow{\rm Sub}_{2p}(G)$ then ${\rm Sub}_{2p}(G)$ satisfies $\phi$.
Conversely, if ${\rm Sub}_{2p}(G)$ satisfies $\phi$ for some $G\in\mathcal C$ then $G$ satisfies $\psi$, thus there exists $F\in\mathcal F_0$ such that $F\rightarrow G$ hence
${\rm Sub}_{2p}(F)\rightarrow {\rm Sub}_{2p}(G)$.
\end{proof}

We deduce this extension of Rossman's theorem to the class of $p$-subdivided graphs:
\begin{corollary}
\label{cor:subhpt}
For every integer $p$, the homomorphism preservation theorem holds for ${\rm Sub}_{p}(\GRA)$.
\end{corollary}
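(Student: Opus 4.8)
The plan is to deduce Corollary~\ref{cor:subhpt} directly from Rossman's theorem (Theorem~\ref{thm:rossman}) together with the Lemma just proved. First I would recall that Theorem~\ref{thm:rossman} says precisely that the homomorphism preservation theorem holds for the class $\GRA$ of all finite graphs: any first-order sentence preserved under homomorphisms on all finite structures is equivalent (over finite structures) to an existential-positive one. The class $\GRA$ is manifestly hereditary — every (induced) subgraph of a finite graph is a finite graph. Hence the Lemma applies with $\mathcal C=\GRA$, and its conclusion is that the homomorphism preservation theorem holds for ${\rm Sub}_{q}(\GRA)$ for every integer $q$. Taking $q=p$ gives exactly the statement of the corollary.

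The one point that deserves a sentence of care is the parity split already handled inside the Lemma: when $p$ is odd, ${\rm Sub}_{p}(\GRA)$ consists of bipartite graphs that are, up to homomorphic equivalence, just $K_1$ and $K_2$, so HPT is trivial there; when $p$ is even we set $p=2p'$ and invoke Corollary~\ref{cor:sub} (the interpretation transfer) inside the Lemma's proof. Either way the Lemma delivers the finite family $\mathcal F$ of $p$-subdivided graphs witnessing that every sentence $\phi$ preserved under homomorphisms on ${\rm Sub}_{p}(\GRA)$ is equivalent there to $\bigvee_{F\in\mathcal F}(F\to\,\cdot\,)$, i.e.\ to an existential-positive sentence.

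There is essentially no obstacle here: the corollary is a pure specialization of the preceding Lemma to $\mathcal C=\GRA$, with the input "HPT holds for $\GRA$" supplied by Rossman's theorem. The only thing to verify is that the hypotheses of the Lemma (heredity of the base class, and availability of HPT on it) are met, which is immediate. I would therefore write the proof as a two-line application: "$\GRA$ is hereditary and, by Theorem~\ref{thm:rossman}, the homomorphism preservation theorem holds for $\GRA$; the result now follows from the Lemma applied with $\mathcal C=\GRA$ and $q=p$."
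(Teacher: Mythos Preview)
Your proposal is correct and is exactly the intended deduction: the paper states the corollary without a separate proof, the preceding sentence ``We deduce this extension of Rossman's theorem\ldots'' making clear that one simply applies the Lemma with $\mathcal C=\GRA$ (hereditary, and satisfying HPT by Theorem~\ref{thm:rossman}) and $q=p$. Your two-line formulation is precisely what is meant.
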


For a discussion on relativization of the homomorphism preservation theorem, we refer the reader
 to \cite[Chapter~10]{Sparsity}.

We summarize below the results obtained in this section:
\begin{lemma}
\label{lem:chpt}
Let $\mathcal C$ be a hereditary class of graphs. Assume $H$-coloring is first-order definable on $\mathcal{C}$.
\begin{itemize}
\item If $\mathcal{C}$ is topologically closed and somewhere dense, then there
exist an integer $p$ (independent of $H$) and a finite set $\mathcal{F}$ of finite graphs such that for every graph $G$ it holds
\begin{equation}
\label{eq:hpt1}
{\rm Sub}_{2p}(G)\nrightarrow H\quad\iff\quad\exists F\in\mathcal F:\ 
F\rightarrow {\rm Sub}_{2p}(G).
\end{equation}
\item If $\mathcal{C}$ is addable and nowhere dense, then there
exists a finite set $\mathcal{F}$ of finite graphs such that for every graph $G\in\mathcal{C}$ it holds
\begin{equation}
\label{eq:hpt2}
G\nrightarrow H\quad\iff\quad\exists F\in\mathcal F:\  F\rightarrow G.
\end{equation}
\end{itemize}
\end{lemma}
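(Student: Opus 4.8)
The plan is to obtain both items directly from the homomorphism preservation theorems already established, after isolating the one place where first-order definability is used. Suppose $H$-coloring is first-order definable on $\mathcal C$, witnessed by a sentence $\Phi$ with $G\rightarrow H\iff G\models\Phi$ for every $G\in\mathcal C$, equivalently $G\nrightarrow H\iff G\models\neg\Phi$. The first step I would record is that $\neg\Phi$ is preserved under homomorphisms not only on $\mathcal C$ but on every subclass $\mathcal D\subseteq\mathcal C$: if $G,G'\in\mathcal D$ with $G\rightarrow G'$ and $G\models\neg\Phi$, then $G'\models\neg\Phi$, since otherwise $G'\rightarrow H$ and hence $G\rightarrow G'\rightarrow H$, contradicting $G\models\neg\Phi$. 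This is exactly the remark opening Section~\ref{sec:hpt}, and is the only point where the hypothesis on $H$ is used. After it, everything reduces to applying, in each case, an HPT that turns ``$\neg\Phi$ is preserved under homomorphisms'' into ``$\neg\Phi$ is equivalent to a positive first-order sentence'', i.e. to $(\exists F\in\mathcal F\colon F\rightarrow\cdot)$ for a finite family $\mathcal F$ of finite graphs.

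For the first item, I would first use that $\mathcal C$, being hereditary and topologically closed, is monotone, so that somewhere-density yields an integer $p_0$ with ${\rm Sub}_{p_0}(\GRA)\subseteq\mathcal C$; since $\mathcal C$ is topologically closed, ${\rm Sub}_{2p}(\GRA)\subseteq\mathcal C$ for every $p$ with $2p\geq p_0$, and I fix one such $p$ --- which depends only on $\mathcal C$, whence the claimed independence of $p$ from $H$. Restricting the equivalence $G\rightarrow H\iff G\models\Phi$ to the subclass ${\rm Sub}_{2p}(\GRA)$, the sentence $\neg\Phi$ is preserved under homomorphisms there; by Corollary~\ref{cor:subhpt} the homomorphism preservation theorem holds for ${\rm Sub}_{2p}(\GRA)$, so $\neg\Phi$ is equivalent on that class to a positive first-order sentence: there is a finite set $\mathcal F$ of finite graphs with $G'\models\neg\Phi\iff(\exists F\in\mathcal F\colon F\rightarrow G')$ for all $G'\in{\rm Sub}_{2p}(\GRA)$. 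Taking $G'={\rm Sub}_{2p}(G)$ (legitimate for every finite $G$, since ${\rm Sub}_{2p}(G)\in{\rm Sub}_{2p}(\GRA)\subseteq\mathcal C$) and rewriting $\neg\Phi$ as $\cdot\nrightarrow H$ gives \eqref{eq:hpt1}.

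For the second item, I would instead invoke Theorem~\ref{thm:qw} to see that the hereditary nowhere dense class $\mathcal C$ is quasi-wide, and then Theorem~\ref{thm:hptnd} --- using that $\mathcal C$ is addable --- to conclude that the homomorphism preservation theorem holds for $\mathcal C$ itself. Since $\neg\Phi$ is preserved under homomorphisms on $\mathcal C$, it is equivalent on $\mathcal C$ to a positive first-order sentence: there is a finite set $\mathcal F$ of finite graphs with $G\models\neg\Phi\iff(\exists F\in\mathcal F\colon F\rightarrow G)$ for every $G\in\mathcal C$, and rewriting $\neg\Phi$ as $\cdot\nrightarrow H$ yields \eqref{eq:hpt2}.

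I do not expect a genuine obstacle here: the lemma is a bookkeeping corollary of Corollary~\ref{cor:subhpt} and Theorems~\ref{thm:qw}--\ref{thm:hptnd}, whose proofs carry the real content. The only points needing a moment's care are that first-order definability of $H$-coloring relativizes to subclasses and that it is $\neg\Phi$ (not $\Phi$) that is preserved under homomorphisms, and, in the first item, choosing the subdivision parameter even --- so that ${\rm Sub}_{2p}$ matches the ``$2p$-subdivision'' of Corollaries~\ref{cor:sub} and~\ref{cor:subhpt} and the statement of the lemma --- while keeping it independent of $H$.
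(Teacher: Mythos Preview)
Your proof is correct and follows essentially the same route as the paper's: in both items you apply the relevant HPT (Corollary~\ref{cor:subhpt} for the somewhere dense case, Theorems~\ref{thm:qw} and~\ref{thm:hptnd} for the nowhere dense case) to the homomorphism-preserved sentence $\neg\Phi$. You are in fact a bit more careful than the paper in justifying why one can take the subdivision parameter to be of the form $2p$ (using monotonicity and topological closure), a point the paper leaves implicit.
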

\begin{proof}
If $\mathcal{C}$ is topologically closed and somewhere dense, then there
exist an integer $p$ such that the class $\mathcal S_{2p}$ of all $2p$-subdivisions of finite graphs is a subclass of $\mathcal C$. As $H$-coloring is  first-order definable on $\mathcal C$ (thus on $\mathcal S_{2p}$) and as
the homomorphism preservation theorem holds for $\mathcal S_{2p}$ 
(according to Corollary~\ref{cor:subhpt}), $H$-coloring may be expressed on 
 $\mathcal S_{2p}$ by an existential first-order formulas, that is there exists a finite set $\mathcal{F}$ of finite graphs such that for every graph $G$ equivalence~\eqref{eq:hpt1} holds.

If $\mathcal{C}$ is addable and nowhere dense, then the existence of
 a finite set $\mathcal{F}$ of finite graphs such that for every graph $G\in\mathcal{C}$ equivalence~\eqref{eq:hpt2} holds immediately follows from Theorem~\ref{thm:hptnd}.
\end{proof}

\section{Connectivity of Forbidden Graphs}
Homomorphism preservation theorems allow to reduce the study of first-order colorings of a class $\mathcal C$ to the study of finite restricted dualities of $\mathcal C$, that is of pairs $(\mathcal F,H)$, where $\mathcal F$ is a finite set of finite graphs, where $H$  is a graph, and where it holds
\begin{equation}
\forall G\in\mathcal C:\qquad (\exists F\in\mathcal F:\ F\nrightarrow G)\iff(G\rightarrow H).
\end{equation}

In general, it is not required that $F\nrightarrow H$ when $F\in\mathcal F$, nor is it required that the graphs in $\mathcal F$ are connected. We shall see that 
if the class $\mathcal C$ is addable or monotone then we can require the graphs in $\mathcal F$ to be connected, and that if $\mathcal C$ is monotone we can further require every $F\in\mathcal F$ belongs to $\mathcal C$ (hence cannot be homomorphic to $H$).

For a graph $G$, we define ${\rm Pre}(G)$ has the set of all the graphs $G'$ 
obtained from $G$ by identifying two vertices of $G$.
Note that we have the following property:
\begin{lemma}
\label{lem:pre}
Let $F,G$ be graphs.
Then $F\rightarrow G$ if and only if
 either $F$ is isomorphic to a subgraph of $G$, or there is $F'\in{\rm Pre}(F)$ such that $F'\rightarrow G$.
\end{lemma}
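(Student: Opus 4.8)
The statement to prove is Lemma~\ref{lem:pre}: for graphs $F,G$, one has $F\rightarrow G$ if and only if either $F$ is isomorphic to a subgraph of $G$, or some $F'\in{\rm Pre}(F)$ satisfies $F'\rightarrow G$. Let me sketch how I would argue this.

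First, the direction from right to left. Suppose $F$ is isomorphic to a subgraph of $G$. Then the inclusion of that subgraph into $G$ is a homomorphism (subgraph inclusions preserve edges), so $F\rightarrow G$. Suppose instead there is $F'\in{\rm Pre}(F)$ with $F'\rightarrow G$. By definition $F'$ is obtained from $F$ by identifying two distinct vertices $u,v$ of $F$; the quotient map $F\to F'$ that sends $u,v$ to the merged vertex and fixes everything else is a homomorphism (edges of $F$ map to edges of $F'$). Composing with $F'\rightarrow G$ gives $F\rightarrow G$. Note one should be a little careful: if $u$ and $v$ are adjacent in $F$, then identifying them would create a loop; one should assume throughout that the graphs are simple and loopless and that ${\rm Pre}(F)$ is only formed by identifying \emph{non-adjacent} vertices (this will matter in the forward direction too, and should perhaps be made explicit — but in the relevant situations $F\rightarrow G$ with $G$ loopless already forces any two identified vertices to be non-adjacent). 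I would state this convention at the top.

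Now the forward direction, which is the substantive one. Assume $f\colon F\to G$ is a homomorphism. If $f$ is injective on vertices, then $f$ realizes $F$ as isomorphic to a subgraph of $G$ (namely the subgraph of $G$ on vertex set $f(V(F))$ with edge set $\{f(e):e\in E(F)\}$), and we are in the first case. If $f$ is not injective, pick two distinct vertices $u,v\in V(F)$ with $f(u)=f(v)$. Since $G$ is loopless and $f(u)=f(v)$, $u$ and $v$ cannot be adjacent in $F$, so the quotient $F'\in{\rm Pre}(F)$ obtained by identifying $u$ and $v$ is well-defined (loopless). The map $f$ factors through the quotient map $\pi\colon F\to F'$: there is a well-defined map $\bar f\colon F'\to G$ with $\bar f\circ\pi=f$, and $\bar f$ is a homomorphism because $\pi$ is surjective on edges and $f$ is a homomorphism. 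Hence $F'\rightarrow G$ with $F'\in{\rm Pre}(F)$, putting us in the second case. This completes the proof.

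The argument is essentially routine; the only point that needs care — and the one I would flag as the ``main obstacle'' such as it is — is the bookkeeping around loops: making sure that the vertices being identified are non-adjacent so that $F'$ stays a simple loopless graph, and checking that $\bar f$ is genuinely well-defined as a map on $F'$ (i.e. that identifying $u$ with $v$ is consistent with $f$, which holds precisely because $f(u)=f(v)$). No induction or clever construction is needed: the case split on whether the homomorphism is injective does all the work, and the quotient/factorization is the standard first-isomorphism-type decomposition of a graph homomorphism into a surjection followed by an injection.
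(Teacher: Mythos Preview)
Your proof is correct and follows essentially the same approach as the paper: a case split on whether the homomorphism $f$ is injective, with the injective case yielding a subgraph and the non-injective case factored through the quotient $F'\in{\rm Pre}(F)$ obtained by identifying two vertices with the same image. Your version is more explicit about the factorization and the non-adjacency/loop issue than the paper's rather terse argument, but the underlying idea is identical.
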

\begin{proof}
Assume $f: F\rightarrow G$ is a homomorphism. Either $f$ is injective and 
$F$ is isomorphic to a subgraph of $G$, or at least two vertices of $F$ are identified by $f$ and thus there is $F'\in{\rm Pre}(F)$ such that $F'\rightarrow G$.
Conversely, if $F$ is isomorphic to a subgraph of $G$ or if there is $F'\in{\rm Pre}(F)$ such that $F'\rightarrow G$ then obviously $F\rightarrow G$.
\end{proof}

Let $\mathcal C$ be a class of graphs and let $(\mathcal{F},H)$ be a restricted duality of $\mathcal C$.
We say that the set $\mathcal F$ is {\em minimal} if 
\begin{itemize}
\item every element of $\mathcal{F}$ is a {\em core} (that is a graph $F$ such that every homomorphism $F\rightarrow F$ is an automorphism);
\item for any proper subset of $\mathcal{F}'$ of $\mathcal F$,
the pair $(\mathcal{F}',H)$ is not a restricted duality of $\mathcal C$,
\item and for any $F\in\mathcal F$, the pair $(\mathcal F\setminus\{F\}\cup{\rm Pre}(F),H)$ is not a restricted duality of $\mathcal C$.
\end{itemize}

It is clear that we can restrict our attention to minimal restricted dualities, as
if $(\mathcal{F},H)$ is a restricted duality of a class $\mathcal C$ then there exists minimal $\mathcal F'$ such that $(\mathcal{F}',H)$ is a restricted duality of  $\mathcal C$.

\begin{lemma}
Let $(\mathcal F,H)$ be a restricted duality of $\mathcal C$, with $\mathcal F$ minimal. 
\begin{itemize}
\item If the class $\mathcal C$ is addable, then every graph in $\mathcal F$ is connected.
\item If the class $\mathcal C$ is monotone, then every graph in $\mathcal F$ is connected and $\mathcal F\subseteq \mathcal C$ (hence $F\nrightarrow H$ holds for every $F\in\mathcal{F}$).
\end{itemize}
\end{lemma}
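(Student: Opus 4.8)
The plan is to read ``restricted duality'' in the form consistent with Lemma~\ref{lem:chpt}: $(\mathcal F,H)$ is a restricted duality of $\mathcal C$ when, for every $G\in\mathcal C$, we have $G\rightarrow H$ if and only if no member of $\mathcal F$ admits a homomorphism to $G$. Two elementary consequences of minimality are used throughout. First, for each $F\in\mathcal F$ the proper subset $\mathcal F\setminus\{F\}$ is not a restricted duality; since $(\mathcal F,H)$ is one, the discrepancy at the witnessing graph cannot be of the form ``$G\rightarrow H$ and some member of $\mathcal F\setminus\{F\}$ maps to $G$'' (for then some member of $\mathcal F$ maps to $G$, forcing $G\nrightarrow H$), so it must be that there is $G_F\in\mathcal C$ with $G_F\nrightarrow H$ and with no member of $\mathcal F\setminus\{F\}$ mapping to $G_F$; and then, since $G_F\nrightarrow H$, some member of $\mathcal F$ maps to $G_F$, necessarily $F$ itself. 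So $F\rightarrow G_F$, $G_F\nrightarrow H$, and $F'\nrightarrow G_F$ for all $F'\in\mathcal F\setminus\{F\}$. Second, by Lemma~\ref{lem:pre}, every homomorphism $F\rightarrow G$ is either injective (so $F$ embeds in $G$) or factors through a member of ${\rm Pre}(F)$; dually, if $F\nrightarrow G$ then $F'\nrightarrow G$ for every $F'\in{\rm Pre}(F)$.

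I would treat the monotone case first, in three steps. \emph{Step~1: $\mathcal F\subseteq\mathcal C$.} Suppose some $F\in\mathcal F$ is not in $\mathcal C$. Since $\mathcal C$ is monotone, $F$ is not isomorphic to a subgraph of any $G\in\mathcal C$, so by the remark above every homomorphism $F\rightarrow G$ with $G\in\mathcal C$ factors through ${\rm Pre}(F)$; hence on $\mathcal C$ the assertion ``$F\rightarrow G$'' is equivalent to ``some $F'\in{\rm Pre}(F)$ maps to $G$'', and therefore ``no member of $\mathcal F$ maps to $G$'' is equivalent to ``no member of $\mathcal F\setminus\{F\}\cup{\rm Pre}(F)$ maps to $G$''. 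Thus $(\mathcal F\setminus\{F\}\cup{\rm Pre}(F),H)$ is a restricted duality of $\mathcal C$, contradicting minimality. \emph{Step~2: $F\nrightarrow H$.} Now $F\in\mathcal C$, so we may take $G=F$ in the duality; as the identity is a homomorphism $F\rightarrow F$, some member of $\mathcal F$ maps to $F$, whence $F\nrightarrow H$. \emph{Step~3: connectedness.} Suppose $F=F_1\sqcup F_2\in\mathcal F$ with $F_1,F_2$ nonempty; then $F_1,F_2\in\mathcal C$ by monotonicity, and since $F$ is a core no component of $F$ maps to another, so $F\nrightarrow F_1$ and $F\nrightarrow F_2$. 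If no member of $\mathcal F$ mapped to $F_1$ and none to $F_2$, then taking $G=F_1$ and $G=F_2$ would give $F_1\rightarrow H$ and $F_2\rightarrow H$, hence $F=F_1\sqcup F_2\rightarrow H$, contradicting Step~2. So some $F_a\in\mathcal F$ satisfies $F_a\rightarrow F_1$ (or, symmetrically, $F_a\rightarrow F_2$); composing with the inclusion $F_1\hookrightarrow F$ gives $F_a\rightarrow F$, and $F_a\neq F$ because $F\nrightarrow F_1$. But then $(\mathcal F\setminus\{F\},H)$ is a restricted duality: for $G\in\mathcal C$, if no member of $\mathcal F\setminus\{F\}$ maps to $G$ then in particular $F_a\nrightarrow G$, whence $F\nrightarrow G$ (otherwise $F_a\rightarrow F\rightarrow G$), so no member of $\mathcal F$ maps to $G$. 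This contradicts minimality, so $F$ is connected.

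For the addable case the same strategy applies, with disjoint unions of members of $\mathcal C$ taking over the role that subgraphs of members of $\mathcal C$ played above. Given $F=F_1\sqcup F_2\in\mathcal F$, the witness graph $G_F$ from the first paragraph satisfies $F\rightarrow G_F$, $G_F\nrightarrow H$, no other member of $\mathcal F$ maps to $G_F$, and in particular $F_1\rightarrow G_F$ and $F_2\rightarrow G_F$; addability lets us combine $G_F$ with other members of $\mathcal C$ into larger graphs that remain in $\mathcal C$. The aim is again to extract from this data either a member $F_a\in\mathcal F\setminus\{F\}$ through which $F$ factors on all of $\mathcal C$ (making $\mathcal F\setminus\{F\}$ a restricted duality), or a suitable member of ${\rm Pre}(F)$ — concretely, a graph obtained by identifying a vertex of $F_1$ with a vertex of $F_2$ — by which $F$ may be replaced (making $\mathcal F\setminus\{F\}\cup{\rm Pre}(F)$ a restricted duality); either outcome contradicts minimality.

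I expect the addable case to be where the real work lies. The clean points of the monotone argument — ``$F_1,F_2\in\mathcal C$'', and plugging $G=F$, $G=F_1$, $G=F_2$ directly into the duality — rely on $F\in\mathcal C$, which need not hold when only addability is assumed, so one cannot evaluate the duality at $F$ or at its components. The hard part is to manufacture substitutes for these graphs by a careful choice of disjoint unions of witness graphs (so that the ``rigidity'' that would make $F$ genuinely non-removable is broken), and to verify that the cross-gluing operation in ${\rm Pre}(F)$ interacts with homomorphisms into such unions exactly as needed; everything else is formal bookkeeping with the three minimality clauses and Lemma~\ref{lem:pre}.
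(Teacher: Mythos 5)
Your monotone case is complete and correct, and it is essentially the paper's own argument (the paper extracts $F\in\mathcal C$ from the witness of the ${\rm Pre}$-minimality clause via Lemma~\ref{lem:pre}, then derives $F_1\rightarrow H$ and $F_2\rightarrow H$ exactly as you do with the auxiliary $F_a$). The genuine gap is the addable bullet: you only state an aim (``extract either a member $F_a\in\mathcal F\setminus\{F\}$ through which $F$ factors, or a suitable member of ${\rm Pre}(F)$'') and explicitly defer ``the real work,'' so nothing is actually proved there. Worse, the data you set up is not the right data: the single witness $G_F$ obtained from the failure of $\mathcal F\setminus\{F\}$ is one graph to which \emph{both} $F_1$ and $F_2$ map, and such a graph cannot separate the two components; nor is there any reason to expect the ${\rm Pre}(F)$ route to close, since identifying a vertex of $F_1$ with a vertex of $F_2$ interacts with homomorphisms into disjoint unions in no useful way.

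The paper's addable argument is short and hinges on a different instance of minimality, namely that neither $(\mathcal F\setminus\{F_1+F_2\}\cup\{F_1\},H)$ nor $(\mathcal F\setminus\{F_1+F_2\}\cup\{F_2\},H)$ is a restricted duality. Running the same case analysis you already performed for $\mathcal F\setminus\{F\}$ on each of these families yields \emph{two} witnesses: $G_1\in\mathcal C$ with $F_2\rightarrow G_1$, $F_1\nrightarrow G_1$ and $G_1\rightarrow H$, and $G_2\in\mathcal C$ with $F_1\rightarrow G_2$, $F_2\nrightarrow G_2$ and $G_2\rightarrow H$. Addability places $G_1+G_2$ in $\mathcal C$; it maps to $H$ componentwise, yet $F_1+F_2\rightarrow G_1+G_2$ (send $F_1$ into $G_2$ and $F_2$ into $G_1$), contradicting the duality. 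The missing idea is precisely this pair of component-separating witnesses; no appeal to ${\rm Pre}(F)$ is needed. (A caveat worth recording: the non-duality of $\mathcal F\setminus\{F\}\cup\{F_i\}$ is not literally one of the three clauses in the stated definition of minimality, so if you work strictly from those clauses you must add or justify this variant.)
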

\begin{proof}

Assume $\mathcal C$ is addable, and assume for contradiction that 
$F_1+F_2\in\mathcal F$. By minimality of $\mathcal F$, neither 
$(\mathcal F\setminus\{F_1+F_2\}\cup\{F_1\},H)$ nor
$(\mathcal F\setminus\{F_1+F_2\}\cup\{F_2\},H)$ are restricted dualities of $\mathcal C$. Hence there exist $G_1,G_2\in\mathcal C$ such that
$F_1\nrightarrow G_1$, $F_2\rightarrow G_1$ (hence $G_1\rightarrow H$), $F_1\rightarrow G_2$, $F_2\nrightarrow G_2$ (hence $G_2\rightarrow H$). 
As $\mathcal{C}$ is addable, $G_1+G_2\in\mathcal C$. But
$F_1+F_2\rightarrow G_1+G_2$, what contradicts $G_1+G_2\rightarrow H$.

Assume $\mathcal C$ is monotone, and assume 
$F\in\mathcal F$. By minimality of $\mathcal F$, there exists $G\in\mathcal C$  such that $F\rightarrow\mathcal G$ but no graph
$F'\in{\rm Pre}(F)$ is homomorphic to $G$. Thus, according to 
Lemma~\ref{lem:pre}, $F$ is isomorphic to a subgraph of $G$ hence, as $\mathcal C$ is monotone, $F\in\mathcal C$. Assume for contradiction that
$F=F_1+F_2$. Then $F_1,F_2\in\mathcal C$ and none of $F_1,F_2$ is homomorphic to the other. By minimality of $\mathcal F$ it follows that
$F_1\rightarrow H$, $F_2\rightarrow H$ hence $F_1+F_2\rightarrow H$, contradicting $F_1+F_2\in\mathcal F$.
\end{proof}
\section{Restricted Dualities}
\label{sec:ard}
As restricted dualities appear as the central notion when dealing with first-order definable coloring, we take time to define and characterize classes with all restricted dualities in the more general framework of relational structures.

\subsection{Classes of Relational Structures}
\label{sec:rel}
We recall some basic definitions, notations and result of model theory. Our terminology is standard, cf \cite{Ebbinghaus1996,Lascar2009}:

A {\em signature} $\sigma$ is a finite set of relation symbols,
each with a specified arity. A {\em $\sigma$-structure} $\mathbf{A}$ 
consists of a {\em universe} $A$, or {\em domain}, and an {\em interpretation} which associates to each relation symbol $R \in\sigma$ of some arity $r$, a
relation $R^{\mathbf A} \subseteq A^r$.

A $\sigma$-structure $\mathbf{B}$ is a {\em substructure} of $\mathbf{A}$ if $B\subseteq A$ and $R^{\mathbf B} \subseteq R^{\mathbf A}$ for every $R\in\sigma$. It is an {\em induced substructure} if $R^{\mathbf B} = R^{\mathbf A} \cap B^r$ for every $R\in\sigma$ of arity $r$. Notice the analogy with the graph-theoretical concept of subgraph and induced subgraph.
A substructure $\mathbf{B}$ of $\mathbf{A}$ is {\em proper} if $\mathbf{A}\neq\mathbf{B}$. If $\mathbf{A}$
 is an induced substructure
of $\mathbf{B}$, we say that $\mathbf{B}$ is an {\em extension} of $\mathbf{A}$.
If $\mathbf{A}$ is a proper induced substructure, then
$\mathbf{B}$ is a {\em proper extension}. If $\mathbf{B}$ is the disjoint union of $\mathbf{A}$ with another $\sigma$-structure, we
say that $\mathbf{B}$ is a {\em disjoint extension} of $\mathbf{A}$. If $S\subseteq A$ is a subset of the universe of $\mathbf{A}$,
then $\mathbf{A}\cap S$ denotes the {\em induced substructure generated by} $S$; in other words, the universe
of $\mathbf{A}\cap S$ is $S$, and the interpretation in $\mathbf{A} \cap S$ of the $r$-ary relation symbol $R$ is
$R^{\mathbf A} \cap S^r$.

The {\em Gaifman graph} ${\rm Gaifman}(\mathbf{A})$ of a $\sigma$-structure $\mathbf A$ is the graph with vertex set $A$ in which two vertices $x\not\simeq y$ are adjacent if and only if there exists a relation $R$ of arity $k\geq 2$ in $\sigma$ and $v_1,\dots,v_k\in A$ such that $\{x,y\}\subseteq\{v_1,\dots,v_k\}$ and $(v_1,\dots,v_k)\in R^{\mathbf A}$.

A {\em block} of a $\sigma$-structure $\mathbf A$ is a tuple
$(R,x_1,\dots,x_k)$ such that $R\in\sigma$ has arity $k$ and $(x_1,\dots,x_k)\in R^{\mathbf A}$.
The {\em incidence graph} ${\rm Inc}(\mathbf A)$ is the bipartite graph $(A,B,E)$ where $A$ is the universe of $\mathbf A$, $B$ is the set of all {\em blocks} of $\mathbf A$, and $E$ is the set of the pairs $\{(R,x_1,\dots,x_k),y\}\subseteq B\times A$ such that $y\in\{x_1,\dots,x_k\}$.
Thus for us  ${\rm Inc}(\mathbf A)$ is a simple graph. No multiple edges are needed for our purposes.


A {\em homomorphism} $\mathbf{A}\rightarrow \mathbf{B}$ between two $\sigma$-structure is defined as a mapping $f:A\rightarrow B$ which satisfies for every relational symbol $R\in\sigma$ the following:
\begin{equation*}
(x_1,\dots,x_k)\in R^{\mathbf A}\quad\Longrightarrow\quad (f(x_1),\dots,f(x_k))\in R^{\mathbf B}.
\end{equation*}
The class of all $\sigma$-structures is denoted by ${\rm Rel}(\sigma)$.

The definition of bounded expansion extends to classes of relational structures: 
a class $\mathcal C$ of relational structures has {\em bounded expansion} if
the class of the Gaifman graphs of the structures in $\mathcal{C}$ has bounded expansion.
 It is immediate that two relational
structures have the same Gaifman graph if they have the same incidence graph,
but that the converse does not hold in general. For a class of relational
structures $\mathcal C$, denote by ${\rm Inc}(\mathcal C)$ the class of all the
incidence graphs ${\rm Inc}(\mathbf{A})$ of the relational structures
${\mathbf A}\in\mathcal C$. 

\begin{proposition}[\cite{Sparsity}]
\label{prop:IG}
Assume that the arities of the relational symbols in $\sigma$ are bounded, and
let $\mathcal C$ be an infinite class of $\sigma$-structures. Then
the class $\mathcal C$ has bounded expansion if and only if the class ${\rm Inc}(\mathcal C)$ has bounded expansion.
\end{proposition}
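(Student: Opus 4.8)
The plan is to phrase ``bounded expansion'' on both sides through the edge densities of shallow topological minors at each depth $p$, and to transport such minors between the incidence graph and the Gaifman graph of each structure, exploiting the bound $r:=\max\{{\rm arity}(R):R\in\sigma\}$ on the degrees of block vertices in the incidence graph. If $\mathcal C$ is finite there is nothing to prove, so assume $\mathcal C$ infinite; by definition $\mathcal C$ has bounded expansion iff ${\rm Gaifman}(\mathcal C)$ does, i.e.\ iff $\overline{\rm d}({\rm Gaifman}(\mathcal C)\shtm p)<\infty$ for every $p$, and likewise ${\rm Inc}(\mathcal C)$ has bounded expansion iff $\overline{\rm d}({\rm Inc}(\mathcal C)\shtm p)<\infty$ for every $p$. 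The structural fact used throughout is the identity ${\rm Gaifman}(\mathbf A)=({\rm Inc}(\mathbf A))^2[A]$ --- two distinct elements are Gaifman-adjacent exactly when they have a common neighbour (necessarily a block vertex) in the bipartite graph ${\rm Inc}(\mathbf A)$ --- together with the observation that each block vertex, having ${\rm Inc}$-degree at most $r$, lies on fewer than $r^2$ such length-two paths. Note, though, that ${\rm Gaifman}(\mathbf A)$ is \emph{not} in general a bounded-depth minor of ${\rm Inc}(\mathbf A)$ (already a single ternary tuple gives ${\rm Gaifman}=K_3$ but ${\rm Inc}=K_{1,3}$), so the transport must be carried out at the level of the whole class rather than structure by structure.

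For ``${\rm Inc}(\mathcal C)$ has bounded expansion $\Rightarrow$ $\mathcal C$ has bounded expansion'' I would argue contrapositively. Suppose $\overline{\rm d}({\rm Gaifman}(\mathbf A_n)\shtm p)\to\infty$ for some $\mathbf A_n\in\mathcal C$, so ${\rm Gaifman}(\mathbf A_n)$ contains a subdivision of a graph $H_n$, with all branch paths of length $\le 2p+1$ and with $|E(H_n)|/|V(H_n)|\to\infty$. Replace each Gaifman-edge appearing in this model by a length-two path through a witnessing block vertex of ${\rm Inc}(\mathbf A_n)$ (shortcutting repetitions so each branch path stays a simple path): the element vertices are retained and the block vertices are new, so the only possible clash is two distinct branch-edges being routed through the same block. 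Since each block carries fewer than $r^2$ Gaifman-edges and each branch path has at most $2p+1$ edges, the ``conflict graph'' on $E(H_n)$ has maximum degree bounded in terms of $p$ and $r$ alone, hence an independent set spanning a subgraph $H_n'\subseteq H_n$ with $|E(H_n')|\ge|E(H_n)|/O_{p,r}(1)$; after the re-routing its branch paths are pairwise internally disjoint, so ${\rm Inc}(\mathbf A_n)$ contains a subdivision of $H_n'$ with all branch paths of length $\le 4p+2$, i.e.\ $H_n'\in{\rm Inc}(\mathbf A_n)\shtm(2p+1)$. As $|E(H_n')|/|V(H_n')|\to\infty$, this contradicts bounded expansion of ${\rm Inc}(\mathcal C)$.

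The converse, ``$\mathcal C$ has bounded expansion $\Rightarrow$ ${\rm Inc}(\mathcal C)$ has bounded expansion'', is again by contraposition. Suppose $\overline{\rm d}({\rm Inc}(\mathbf A_n)\shtm p)\to\infty$ and fix, inside ${\rm Inc}(\mathbf A_n)$, a subdivision of a graph $H_n$ with all branch paths of length $\le 2p+1$ and $|E(H_n)|/|V(H_n)|\to\infty$. Two clean-ups. First, any principal vertex of this model lying on the block side has ${\rm Inc}$-degree $\le r$, hence $H_n$-degree $\le r$; deleting these (at most $r|V(H_n)|$ edges) leaves a subgraph, still of diverging edge density, all of whose principal vertices are elements. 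Second, each branch path is now a path between two elements in the bipartite graph ${\rm Inc}(\mathbf A_n)$, so it alternates sides and its element vertices --- every other vertex along it, endpoints included --- form a path of length $\le p$ in ${\rm Gaifman}(\mathbf A_n)=({\rm Inc}(\mathbf A_n))^2[A]$; deleting the block vertices from the model thus produces a subdivision of the same graph inside ${\rm Gaifman}(\mathbf A_n)$ with all branch paths of length $\le p$, and internal disjointness is preserved because this operation only deletes vertices. Hence that graph lies in ${\rm Gaifman}(\mathbf A_n)\shtm p$ with diverging edge density, contradicting bounded expansion of $\mathcal C$.

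The routine parts are the depth bookkeeping for subdivisions, the fact that each $H_n$ may be taken simple, and the observation that unary and nullary relations add only near-pendant structure to ${\rm Inc}(\mathbf A)$ and nothing to ${\rm Gaifman}(\mathbf A)$, which is harmless here. The one genuinely delicate point, and where I expect to spend the most care, is the extraction step in the first direction: converting the ``multiply-used'' re-routing of Gaifman-edges through shared blocks into a genuine subdivision at the cost of only a bounded multiplicative loss in edge density. This is exactly what the degree bound $r$ on block vertices buys, and it is the reason the hypothesis that the arities are bounded is needed.
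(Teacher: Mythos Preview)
The paper does not prove this proposition; it is simply quoted from \cite{Sparsity} without argument, so there is no in-paper proof to compare your attempt against.

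That said, your proof is correct and is along the lines of the standard argument. Two small points are worth making explicit when you write it up. In the first direction, after selecting an independent set $E'$ in the conflict graph you should take $H_n'$ to be the \emph{spanning} subgraph $(V(H_n),E')$, so that
\[
\frac{|E(H_n')|}{|V(H_n')|}=\frac{|E'|}{|V(H_n)|}\ \ge\ \frac{1}{(2p+1)\binom{r}{2}+1}\cdot\frac{|E(H_n)|}{|V(H_n)|}\ \longrightarrow\ \infty,
\]
and you should note that shortcutting inside a branch path only discards block vertices, so independence in the conflict graph (which was defined via the pre-shortcut routing) still forces the routed paths to be internally disjoint afterwards. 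In the second direction, when you delete the block-side principal vertices you should observe that once $d_n:=|E(H_n)|/|V(H_n)|>r$ not all principals can be blocks (otherwise every $H_n$-vertex has degree at most $r$), so $H_n''$ is nonempty, and since $|V(H_n'')|\le |V(H_n)|$ one gets $|E(H_n'')|/|V(H_n'')|\ge d_n-r\to\infty$. With these remarks the argument is complete; your identification of the conflict-resolution step as the one place where the bounded-arity hypothesis is genuinely needed is exactly right.
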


\subsection{Classes with all restricted dualities}
A class of $\sigma$-structures $\mathbf A$ has {\em all restricted dualities} if
every non-empty connected $\sigma$-structure has a restricted dual for $\mathcal
C$, that is: for every non-empty connected $\sigma$-structure $\mathbf F$ there
exists a $\sigma$-structure $\mathbf D$ such that $\mathbf F\nrightarrow \mathbf D$ and

$$\forall \mathbf A\in\mathcal C:\qquad(\mathbf F\rightarrow \mathbf
A)\quad\iff\quad (\mathbf A\nrightarrow \mathbf D).$$

Note that this definition implies that also for any finite set $\mathbf  F_1,\mathbf  F_2,\dots, \mathbf  F_t$ 
 of connected $\sigma$-structures  there exists a $\sigma$-structure $\mathbf  D$ such that
$\mathbf F_i\nrightarrow \mathbf D$ (for $1\leq i\leq t$) and

$$\forall \mathbf A\in\mathcal C:\qquad(\exists i\leq t:\ \mathbf F_i\rightarrow \mathbf
A)\quad\iff\quad (\mathbf A\nrightarrow \mathbf D).$$

For a structure ${\mathbf A}$ and an integer $t$, define
$\Theta^t({\mathbf A})$ as the minimum order of a structure 
$\mathbf B$ such that
\begin{itemize}
  \item $\mathbf A\rightarrow \mathbf B$,
  \item every substructure $\mathbf F$ of $\mathbf B$ with order $|F|<t$ has a homomorphism to  $\mathbf A$.
\end{itemize}
Intuitively, such a structure $\mathbf B$ can be seen as 
 approximate core of $\mathbf{A}$: For $t \geq |B|$, $\mathbf{A}$  and $\mathbf{B}$ are 
 homomorphism-equivalent and $\mathbf{B}$ is the core of $\mathbf{A}$ 
 (alternately, $\mathbf{B}$ is the minimal retract of $\mathbf{A}$).
 A structure $\mathbf B$ with the above properties and order
$\Theta^t(\mathbf A)$ is called a {\em $t$-approximation} of (the
homomorphism equivalence class of) $\mathbf A$.

It appears that existence of a uniform approximation is equivalent for a class to having all restricted dualities. This is formalized by Theorem~\ref{thm:approx},  stated in the introduction. Theorem~\ref{thm:approx} will be proved now:

\begin{proof}[Proof of Theorem~\ref{thm:approx}]
Assume $\mathcal C$ is bounded and has all restricted dualities and let
$t\in\bbbn$ be an integer. 
Let $\mathbf Z$ be a strict bound of $\mathcal C$, that is a structure such that
for every $\mathbf A\in\mathcal C$ it holds $\mathbf A\rightarrow \mathbf Z$ but
$\mathbf Z\nrightarrow\mathbf A$.
As the sequence $(\Theta^t({\mathbf A}))_{t\in\bbbn}$ is obviously
non-decreasing, we may assume without loss of generality that $t\geq |Z|$. For a structure
${\mathbf A}\in\mathcal C$, let $\mathcal F_t({\mathbf A})$ be the set of all
connected cores ${\mathbf T}$ of order at most $t$ such that ${\mathbf
T}\nrightarrow {\mathbf A}$. This set is not empty as it contains the core of
$\mathbf Z$. For ${\mathbf T}\in\mathcal F_t({\mathbf A})$, let
${\mathbf D}_{\mathbf T}$ be the dual of ${\mathbf T}$ relative to $\mathcal C$
and let ${\mathbf A}'$ be the product of all the ${\mathbf D}_{\mathbf T}$ for
${\mathbf T}\in\mathcal F_t({\mathbf A})$.
First notice that for every ${\mathbf T}\in\mathcal F_t({\mathbf A})$ we have
${\mathbf T}\nrightarrow {\mathbf A}$ hence ${\mathbf A}\rightarrow {\mathbf
D}_{\mathbf T}$. It follows that ${\mathbf A}\rightarrow {\mathbf A}'$. Let
${\mathbf T}'$ be a connected substructure of order at most $t$ of $\mathbf A'$.
Suppose for a contradiction that ${\mathbf T}'\nrightarrow {\mathbf
A}$. Then ${\rm Core}({\mathbf T}')\in\mathcal F_t({\mathbf A})$ hence
${\mathbf A}'\rightarrow {\mathbf D}_{{\mathbf T}'}$ thus ${\mathbf
T}'\nrightarrow {\mathbf A}'$ (as for otherwise ${\mathbf T}'\rightarrow
{\mathbf D}_{{\mathbf T}'}$), a contradiction. Thus ${\mathbf T}'\rightarrow {\mathbf
A}$. It follows that $\Theta^t({\mathbf A})\leq |A'|\leq
C(t)$ for some suitable finite constant $C(t)$ independent of ${\mathbf A}$
(for instance, one can choose $C(t)$ to be the product of the orders of
all the duals relative to $\mathcal C$ of connected cores of order at most $t$).

Conversely, assume that 
we have $\sup_{{\mathbf A}\in\mathcal C}\Theta^t({\mathbf
A})<\infty$ for every $t\in\bbbn$.
The class $\mathcal C$ is obviously
bounded by the disjoint union of all non-isomorphic minimal order
$1$-approximations of the structures in $\mathcal C$.
Let ${\mathbf F}$ be a connected
$\sigma$-structure, let $t\geq |F|$, and let $\mathcal D$ be a set
of $t$-approximations of all the structures ${\mathbf A}\in\mathcal C$ such that
${\mathbf F}\nrightarrow {\mathbf A}$. As all the
$\Theta^t({\mathbf A})$ are bounded by some constant $C(t)$,
the set $\mathcal D$ is finite. 
 If $\mathcal D$ is empty, let $D_t(\mathbf F)$ be the empty substructure.
Otherwise, let $D_t({\mathbf F})$ be the disjoint union of all the graphs in
$\mathcal D$.
First notice that ${\mathbf F}\nrightarrow D_t({\mathbf F})$ as for otherwise
${\mathbf F}$ would have a homomorphism to some structure in $\mathcal D$ (as
${\mathbf F}$ is connected), that is  to some $t$-approximation $\mathbf B'$ of
a structure ${\mathbf B}$ such that ${\mathbf F}\nrightarrow {\mathbf B}$ (this
 would contradict $\mathbf F\rightarrow {\mathbf B'}$). Also, if ${\mathbf
F}\rightarrow {\mathbf A}$ then ${\mathbf A}\nrightarrow D_t({\mathbf F})$ 
(for otherwise ${\mathbf F}\rightarrow D_t({\mathbf F})$) and if ${\mathbf
F}\nrightarrow {\mathbf A}$ then $\mathcal D$ contains a $t$-approximation
$\mathbf A'$ of $\mathbf A$ thus ${\mathbf A}\rightarrow
D_t({\mathbf F})$. Altogether, $D_t({\mathbf F})$ is a dual of ${\mathbf F}$
relative to $\mathcal C$.
\end{proof}

We proved in \cite{POMNIII} that bounded expansion classes have all restricted dualities:
\begin{theorem}
\label{thm:dual}
Let $\mathcal C$ be a class with bounded expansion. Then for every connected graph $F$ there exists a graph $D$ such that $(F,D)$ is a restricted homomorphism duality for $\mathcal C$:
\begin{equation}
\forall G\in\mathcal C\qquad (F\rightarrow G)\quad\iff\quad(G\nrightarrow H).
\end{equation}
\end{theorem}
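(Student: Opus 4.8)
The plan is to combine two ingredients: the low tree-depth coloring characterization of bounded expansion classes, and an explicit restricted duality valid in the auxiliary class of all graphs of tree-depth at most a fixed $p$. Fix the connected graph $F$; we may assume $F$ has an edge, since if $F=K_1$ the empty graph is a restricted dual. Put $p=|V(F)|$ and write $[N]=\{1,\dots,N\}$. I would first recall the standard fact that, because $\mathcal C$ has bounded expansion, there is an integer $N=N(p)$ such that every $G\in\mathcal C$ admits a proper vertex coloring $\gamma\colon V(G)\to[N]$ for which the union of any $j\le p$ color classes induces a subgraph of tree-depth at most $j$. The point is that a homomorphic image of $F$ spans at most $p$ vertices, hence meets at most $p$ color classes of $\gamma$; therefore, for $G\in\mathcal C$,
$$
F\to G\qquad\Longleftrightarrow\qquad \exists\,I\subseteq[N],\ |I|\le p,\ F\to G[\gamma^{-1}(I)],
$$
and each $G[\gamma^{-1}(I)]$ appearing here has tree-depth at most $|I|\le p$. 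So the problem reduces to understanding $F$-colorability on graphs of bounded tree-depth, and then recombining the pieces.

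For the bounded tree-depth step I would establish: there is a single graph $D_p$ with $F\nrightarrow D_p$ such that $F\to H\iff H\nrightarrow D_p$ for every graph $H$ of tree-depth at most $p$. To build $D_p$, invoke the known finiteness result that, up to homomorphic equivalence, there are only finitely many connected graphs of tree-depth at most $p$ admitting no homomorphism from $F$ (this follows from the finiteness of the set of connected cores of tree-depth at most $p$, proved by a pumping argument on elimination trees). Let $\mathcal R_p$ be a finite set of representative cores and let $D_p$ be the disjoint union of the members of $\mathcal R_p$. Then $F\nrightarrow D_p$, because $F$ is connected and maps to no $C\in\mathcal R_p$; if $F\to H$ then a homomorphism $H\to D_p$ would force $F\to D_p$, so $H\nrightarrow D_p$; and if $F\nrightarrow H$ then every component of $H$ has tree-depth at most $p$ and receives no homomorphism from $F$, hence has its core in $\mathcal R_p$ and maps into $D_p$, so $H\to D_p$.

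To assemble a dual for the whole class, let $\Lambda$ be the family of subsets of $[N]$ of size between $1$ and $p$, take for each $I\in\Lambda$ a private copy $D_p^I$ of $D_p$, and define $D$ as a fiber product over colors: its vertices are the pairs $(c,(d_I)_{I\in\Lambda,\,c\in I})$ with $c\in[N]$ and $d_I\in V(D_p^I)$, with $(c,(d_I))$ adjacent to $(c',(d'_I))$ iff $c\ne c'$ and $d_I\sim d'_I$ in $D_p^I$ for every $I\in\Lambda$ containing both $c$ and $c'$. A direct check then gives, for \emph{every} graph $G$,
$$
G\to D\qquad\Longleftrightarrow\qquad G\text{ has a proper }N\text{-coloring }\gamma'\text{ with }G[\gamma'^{-1}(I)]\to D_p\text{ for every }I\in\Lambda,
$$
since from $\gamma'$ together with homomorphisms $\delta_I\colon G[\gamma'^{-1}(I)]\to D_p^I$ one assembles $v\mapsto(\gamma'(v),(\delta_I(v))_I)$, and conversely these data are the coordinate projections of a homomorphism $G\to D$. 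Now if $F\nrightarrow G$ with $G\in\mathcal C$, the low tree-depth coloring $\gamma$ witnesses $G\to D$ via the previous step. If $F\to G$, then for any proper $N$-coloring $\gamma'$ the image of $F$ lies inside $G[\gamma'^{-1}(I_0)]$ for some $I_0\in\Lambda$, so a homomorphism $G\to D$ would yield $F\to G[\gamma'^{-1}(I_0)]\to D_p$, contradicting $F\nrightarrow D_p$; hence $G\nrightarrow D$. Applying the same reasoning to the graph $F$ itself — legitimate because the displayed equivalence holds for all graphs, not just those in $\mathcal C$ — gives $F\nrightarrow D$. Therefore $(F,D)$ is a restricted homomorphism duality for $\mathcal C$.

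I expect the real difficulty to be concentrated in the bounded tree-depth step, specifically the finiteness input that $F$-free connected graphs of bounded tree-depth fall into finitely many homomorphism classes, which is exactly what makes $D_p$ a finite graph; the low tree-depth coloring characterization of bounded expansion is used as a black box and the fiber-product bookkeeping is routine. An alternative organization would feed the class of graphs of tree-depth at most $p$ — which is bounded, essentially by the same finiteness — into Theorem~\ref{thm:approx} and transport the resulting $t$-approximations along the coloring, but the finiteness is needed either way.
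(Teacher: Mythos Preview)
Your proof is correct and rests on the same two pillars as the paper's argument (given there only as a sketch for the structural generalization, Theorem~\ref{thm:dual_struc}; Theorem~\ref{thm:dual} itself is merely quoted from~\cite{POMNIII}): the low tree-depth coloring characterization of bounded expansion, and the finiteness of cores of graphs of bounded tree-depth. The assembly, however, is organized differently. Rather than first building a dual $D_p$ for the class of tree-depth $\le p$ graphs and then taking a color-indexed fiber product, the paper works on the approximation side: it quotients each $G\in\mathcal C$ by the equivalence $x\sim y$ iff $c(x)=c(y)$ and $f_I(x)=f_I(y)$ for every $I$ (where the $f_I$ are the bounded-image retractions of the $p$-colored pieces), checks that the quotient $\hat G$ is a $t$-approximation of $G$ of size bounded independently of $G$, and then invokes the characterization Theorem~\ref{thm:approx} to obtain all restricted dualities at once. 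Your route gives an explicit description of $D$ and avoids Theorem~\ref{thm:approx} altogether; the paper's route is terser once that theorem is available and transfers verbatim to arbitrary relational structures. You in fact anticipate this alternative in your last paragraph, so the difference is one of packaging rather than substance.
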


Theorem~\ref{thm:dual} naturally extends to relational structures by considering Gaifman graphs. We sketch a proof of this generalization, which is based on the above Theorem~\ref{thm:approx}.

\begin{theorem}
\label{thm:dual_struc}
Let $\mathcal K$ be a class  of relational structures.
If the class of the Gaifman graphs of the structures in $\mathcal K$ has bounded expansion then the class $\mathcal K$ has all restricted dualities.
\end{theorem}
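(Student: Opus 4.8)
The plan is to reduce Theorem~\ref{thm:dual_struc} to Theorem~\ref{thm:approx} by producing, for each structure $\mathbf A$ in $\mathcal K$ and each integer $t$, a bounded-size $t$-approximation $\mathbf A_t$. The bridge between relational structures and graphs is the incidence graph: by Proposition~\ref{prop:IG}, since the Gaifman graphs of $\mathcal K$ have bounded expansion (and the arities in $\sigma$ are finite), the class ${\rm Inc}(\mathcal K)$ of incidence graphs also has bounded expansion. So first I would recall how a $\sigma$-structure is coded by its incidence graph together with a bounded vertex-colouring (one colour for each domain vertex, and for each relation symbol $R$ of arity $k$ and each position $1\le i\le k$ a colour marking the $i$-th argument), and note that homomorphisms of $\sigma$-structures correspond exactly to colour-preserving homomorphisms of the coloured incidence graphs. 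Since bounded expansion is preserved under adding a bounded number of vertex colours, the coloured class ${\rm Inc}(\mathcal K)^+$ still has bounded expansion.

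Next I would invoke Theorem~\ref{thm:dual} (in its coloured-graph version, which follows by the same argument since a bounded vertex-colouring can be absorbed into the signature) to conclude that the coloured class ${\rm Inc}(\mathcal K)^+$ has all restricted dualities; equivalently, by the graph case of Theorem~\ref{thm:approx}, for every $t$ there is $N'(t)$ so that every $\mathbf A\in\mathcal K$ admits a coloured graph $I_t$ with at most $N'(t)$ vertices such that ${\rm Inc}(\mathbf A)^+\rightarrow I_t$ and every induced subgraph of $I_t$ on fewer than $t$ vertices maps back to ${\rm Inc}(\mathbf A)^+$. Then I would \emph{decode} $I_t$ into a $\sigma$-structure $\mathbf A_t$: vertices coloured as domain elements become the universe, vertices coloured as ``$R$-blocks'' together with their incident position-edges specify the tuples of $R^{\mathbf A_t}$ (discarding ill-formed blocks, e.g. ones missing a position or with two vertices at the same position). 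The homomorphism ${\rm Inc}(\mathbf A)^+\rightarrow I_t$ yields a homomorphism $\mathbf A\rightarrow \mathbf A_t$, and $|A_t|\le N'(t)$, so I would set $N(t)=N'(t)$.

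The remaining point, which is where the real care is needed, is the third bullet of Theorem~\ref{thm:approx}: every substructure $\mathbf F$ of $\mathbf A_t$ with $|F|<t$ must map homomorphically to $\mathbf A$. Given such an $\mathbf F$, consider its incidence graph: the vertices of $\mathbf F$ plus the blocks of $\mathbf F$ form at most $|F|+\binom{|F|}{\text{arity}}\cdot|\sigma|$-many vertices, which is still bounded by a function of $t$ — so the threshold $t'$ used in the graph-level approximation must be chosen \emph{after} fixing $t$, large enough that the incidence graph of any $<t$-element substructure of any decoded $\mathbf A_{t'}$ has fewer than $t'$ vertices. With that choice, the corresponding induced subgraph of $I_{t'}$ has $<t'$ vertices, hence maps to ${\rm Inc}(\mathbf A)^+$ in a colour-preserving way, and decoding this graph homomorphism gives $\mathbf F\rightarrow\mathbf A$. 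The main obstacle is precisely this bookkeeping of thresholds — one must verify that the incidence graph of a small substructure is itself small in a way that only depends on $t$ and the (bounded) arities, and that the induced-subgraph condition at the graph level transfers correctly through the decoding, including handling blocks of $I_{t'}$ that are not ``well-formed'' (these can simply be deleted before decoding, which only removes tuples and hence only makes the homomorphism conditions easier). Once the thresholds are lined up, both the bounded-size condition and the two homomorphism conditions follow, and Theorem~\ref{thm:approx} gives that $\mathcal K$ is bounded and has all restricted dualities, as required. \qed
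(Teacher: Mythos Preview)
Your approach is genuinely different from the paper's and, modulo two issues discussed below, workable. The paper does \emph{not} pass through incidence graphs at all: it constructs the $t$-approximation of $\mathbf A$ directly from a low tree-depth colouring of ${\rm Gaifman}(\mathbf A)$. Concretely, it takes a colouring with $N=\chi_t({\rm Gaifman}(\mathbf A))$ colours such that any $\leq t$ colour classes induce tree-depth $\leq t$, uses the fact that a structure whose Gaifman graph has tree-depth $\leq t$ retracts onto a substructure of size $\leq\digamma(t)$ to get retractions $f_I$ for each $I\in\binom{[N]}{t}$, and then quotients $\mathbf A$ by the equivalence $x\sim y\iff c(x)=c(y)$ and $f_I(x)=f_I(y)$ for all $I$. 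This yields $|\hat A|\leq\digamma(t)^{N^t}$, and Theorem~\ref{thm:approx} finishes. So the paper's proof is a direct, self-contained construction exposing the tree-depth mechanism, whereas yours is a black-box reduction to the graph case via an encoding and a decoding. Your route is more modular; the paper's route is more informative about why the bound holds and avoids the decoding bookkeeping altogether.

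There is, however, a real gap in your reduction. You invoke Theorem~\ref{thm:dual} ``in its coloured-graph version, which follows by the same argument''. But a vertex-coloured graph is precisely a $\sigma$-structure with one binary relation and finitely many unary relations, whose Gaifman graph is the underlying graph; so the ``coloured-graph version'' of Theorem~\ref{thm:dual} is exactly the special case of Theorem~\ref{thm:dual_struc} for such signatures. You are therefore assuming an instance of the very theorem you are proving. It is true that the proof of Theorem~\ref{thm:dual} in \cite{POMNIII} goes through unchanged for coloured graphs, but once you grant yourself that, you have essentially granted yourself the paper's direct argument for general $\sigma$, since that argument uses nothing about the signature beyond bounded arity. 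To make your reduction non-circular you would need a genuine reduction from coloured to uncoloured graphs (e.g.\ by attaching pairwise hom-incomparable rigid rooted gadgets of bounded size, one per colour, and checking that this preserves bounded expansion and translates homomorphisms correctly); this is doable but is additional work you have not done.

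A second, smaller issue: your encoding says the colours include ``for each relation symbol $R$ of arity $k$ and each position $1\le i\le k$ a colour marking the $i$-th argument''. As vertex colours this cannot work, since a single domain element can occur at different positions in different blocks, and the paper's ${\rm Inc}(\mathbf A)$ is a simple graph with no position data on edges. You need either edge colours (hence a multigraph) or, to stay with vertex-coloured simple graphs, a $1$-subdivision of each incidence edge with the subdivision vertex coloured by $(R,i)$; the latter preserves bounded expansion and makes coloured homomorphisms correspond exactly to $\sigma$-homomorphisms. Once you fix the encoding this way, your threshold argument (choosing $t'$ as a function of $t$ and the arities so that the coded incidence graph of any $<t$-element substructure has $<t'$ vertices) is correct.
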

\begin{proof}[Sketch of the proof]
Let $\mathcal K$ be a class  of relational structures.
Assume the class of the Gaifman graphs of the structures in $\mathcal K$ has bounded expansion.
Let $\mathbf A\in\mathcal{K}$, and let $t\in\bbbn$ be at least as large as the maximum arity
of a relation in the signature of $\mathbf{A}$.

The {\em tree-depth} ${\rm td}(G)$ of a graph $G$ is the minimum height of a rooted forest whose closure includes $G$ as a subgraph. One of the most interesting properties of  tree-depth is that
there exists a function $\digamma:\bbbn\rightarrow\bbbn$ with the property
that if the Gaifman graph of a structure $\mathbf{B}$ has tree-depth at most $t$ then
there exists a homomorphism $f:\mathbf{B}\rightarrow\mathbf{B}$ such that 
$|f(\mathbf B)|\leq\digamma(t)$~\cite{Taxi_tdepth}.
For integer $t$, we defined in \cite{Taxi_tdepth} the graph invariant $\chi_t$ as follows:
for a graph $G$, $\chi_t(G)$ is the minimum number of colors needed in a coloring of $G$ such that
the union of every subset of $k\leq t$ color classes induces a subgraph with tree-depth at most $k$ (such colorings are called {\em low tree-depth colorings}).
It has been proved in~\cite{POMNI} that a class of graphs $\mathcal C$ has bounded expansion if and only if for every integer $t$ it holds $\sup\{\chi_t(G):\ G\in\mathcal{C}\}<\infty$
 (this is related to Theorem~\ref{thm:BEchi} above).

Consider a coloring $c$ of the Gaifman graph of $\mathbf{A}$ by $N=\chi_t({\rm Gaifman}(\mathbf A))$ colors, which is such that
the union of every subset of $k\leq t$ color classes induces a subgraph with tree-depth at most $k$.  It follows that 
for each $I\in\binom{[N]}{t}$ there exists a homomorphism
$f_I:\mathbf A_I\rightarrow \mathbf A_I$ such that $|f_I(\mathbf A_I)|\leq\digamma(t)$, where
$\mathbf{A}_I$ denotes the substructure of $\mathbf{A}$ induced by elements with color in $I$.
Define the equivalence relation $\sim$ on the domain of $\mathbf A$ by
 $$x\sim y\quad\iff\quad c(x)=c(y)\text{ and }\forall I\in\binom{[N]}{t}\ f_I(x)=f_I(y).$$ 
Define the structure $\hat{\mathbf{A}}$ (with same signature as $\mathbf A$) whose domain is the set of the
equivalence classes $[x]\in  A/\sim$, and relations are defined by
$$([x_1],\dots,[x_{k_i}])\in R_i^{\hat{\mathbf{A}}}\quad\iff\quad
\forall I\in\binom{[N]}{t}\  (f_I(x_1),\dots,f_I(x_{k_i}))\in R_i^{\mathbf{A}}.$$

 We also define a
$N$-coloration of $\hat{\mathbf{A}}$ by $\hat c([x])=c(x)$. One checks
easily that $\hat{\mathbf{A}}$ and $\hat
c$ are well defined.
By construction, $x\mapsto [x]$ is a homomorphism $\mathbf A\rightarrow \hat{\mathbf{A}}$.
Moreover,  for every $I\in\binom{[N]}{t}$ the mapping 
$[x]\mapsto f_I(x)$ is a homomorphism $\hat{\mathbf{A}}_I\rightarrow \mathbf{A}_I$ 
(where $\hat{\mathbf{A}}_I$ is the substructure of $\hat{\mathbf{A}}$ induced by colors in $I$).
It follows that 
$$|\Theta^{t}(\mathbf{A})|\leq|\hat{A}|\leq \digamma(t)^{N^t}
\leq \digamma(t)^{\chi_t({\rm Gaifman}(\mathcal K))^t}.$$
According to Theorem~\ref{thm:approx}, this implies that the class $\mathcal{K}$ has
all restricted dualities.
\end{proof}
For an alternate proof of this Theorem, we refer the reader to~\cite{POMNIII, Sparsity}.
\subsection{Topologically closed classes of graphs with all restricted dualities}
\label{sec:sub}
The special case of topologically closed classes of graphs is of particular interest here, and we have in this case a much simpler characterization of the classes that have all restricted dualities. We are now ready to prove Theorem~\ref{thm:sub}.

\begin{proof}[Proof of Theorem~\ref{thm:sub}]
The proof follows from the next three implications:
\begin{itemize}
\item (1)$\ \Rightarrow$ (2) is a direct consequence of Theorem~\ref{thm:dual}.
\item (2)$\ \Rightarrow$ (3) is straightforward (consider for $H_g$ a dual of $C_g$ for $\mathcal C$).
	\item (3)$\ \Rightarrow$ (1) is proved by contradiction: assume that (3) holds and that $\mathcal C$ does not have bounded expansion. According to Theorem~\ref{thm:BEchi} there exists an integer $p$ such that $\mathcal C\shtm p$ has unbounded chromatic number. As $\mathcal C$ is topologically closed there exists an odd integer $g\geq p$ and a graph $G_0\in\mathcal C$ such that
	$G_0$ is the $(g-1)$-subdivision of a graph $H_0$ with chromatic number $\chi(H_0)>|H_g|$. According to (3), there exists a homomorphism $f:G_0\rightarrow H_g$. As $C_g\nrightarrow H_g$, the ends of
	a path of length $g$ cannot have the same image by $f$. It follows that any two adjacent vertices in $H_0$ correspond to branching vertices of $G_0$ which are mapped by $f$ to distinct vertices of $H_g$. It follows that $\chi(H_0)\leq |H_g|$, a contradiction.
\end{itemize}
\end{proof}

\section{On First-Order Definable $H$-colorings}
\label{sec:focol}
In this section we prove our main characterization result on first-order definable colorings, stated in the introduction as Theorem~\ref{thm:main}.

\begin{proof}[Proof of Theorem~\ref{thm:main}]
Assume that the class $\mathcal C$ is somewhere dense.
As $\mathcal{C}$ is  topologically closed, there exists 
an integer $p$ such that ${\rm Sub}_{2p}(\GRA)\subseteq\mathcal C$.

Assume for contradiction that there exists a non-bipartite graph $H$
(different from $K_1$) of odd-girth strictly greater than $2p+1$
and a first-order formula $\Phi$ such that 
for every graph $G\in\mathcal C$ holds
$$(G\vDash \Phi)\quad\iff\quad (G\rightarrow H).$$
According to Corollary~\ref{cor:subhpt}, as $\neg\Phi$ is preserved by homomorphisms on $\mathcal C$ (hence on ${\rm Sub}_{2p}(\GRA)$) it is equivalent
on ${\rm Sub}_{2p}(\GRA)$ with an existential first-order formula, that is: there exists a finite family $\mathcal F$ such that
for every graph $G$ it holds:
$$
\forall F\in\mathcal F\ F\nrightarrow  {\rm Sub}_{2p}(G)\quad\iff\quad {\rm Sub}_{2p}(G)\rightarrow H.
$$
Clearly, the graphs in $\mathcal F$ are non-bipartite. Let $g$ be the maximum of girth of graphs in $\mathcal F$ and let $G$ be a graph
with chromatic number $\chi(G)>|H|$ and odd-girth ${\rm odd-girth}(G)>g$. Then 
for every $F\in\mathcal F$ we have $F\nrightarrow  {\rm Sub}_{2p}(G)$ hence ${\rm Sub}_{2p}(G)\rightarrow H$. However, has the odd-girth of $H$ is strictly greater than $2p+1$ two 
branching vertices of ${\rm Sub}_{2p}(G)$ corresponding to adjacent vertices of $G$ cannot be mapped to a same vertex. It follows that $|H|\geq\chi(G)$, a contradiction.

To the opposite, if  $\mathcal C$ has bounded expansion, there exists for every integer $p$ a non-bipartite graph $H_p$ of odd-girth strictly greater than $2p+1$ and
a first order formula $\Phi_p$ such that for every graph $G\in\mathcal C$ holds
$$(G\vDash \Phi_p)\quad\iff\quad (G\rightarrow H_p).$$
Indeed, consider for $\Phi_p$ the formula asserting that $G$ contains an odd cycle of length at most
$2p+1$, and for $H_p$ the restricted dual of the cycle $C_{2p+1}$ with respect to $\mathcal C$ (whose existence follows from Theorem~\ref{thm:dual}.

Now assume that Conjecture~\ref{conj:nd} holds, and that $\mathcal C$ is a hereditary topologically closed addable nowhere dense class that is not a bounded expansion class.
Then there exists an integer $p$ such that 
$\mathcal C$ includes $2p$-subdivisions of graph with arbitrarily large chromatic number and girth.
 Assume for contradiction that there is a non-bipartite graph $H_p$ of odd-girth strictly greater than $2p+1$ such that $H_p$-coloring is first-order definable on $\mathcal C$, and let $\Phi$ be a formula such that for every
$G\in\mathcal C$ it holds $G\rightarrow H_p$ if and only if $G\models\Phi$.
According to Theorem~\ref{thm:hptnd}, there exists a finite family $\mathcal F$ of finite graphs such that $G\models H_p$ if and only if no graph in $\mathcal F$ is homomorphic to $G$. The graphs in $\mathcal F$ are non-bipartite ($\mathcal C$ contains long odd cycles homomorphic to $H_p$). Let $g$ be the maximum of the odd-girths of the graphs in $\mathcal{F}$.
Let $G\in\mathcal C$ be a $2p$-subdivision of a graph $H$ with girth greater
than $g$ and chromatic number $\chi(H)>|H_p|$. As the girth of $G$ is greater than $g$, no graph in $\mathcal{F}$ is homomorphic to $G$ hence there exists a homomorphism $f:G\rightarrow H_p$. As $C_{2p+1}\nrightarrow G$ it follows that vertices of $G$ linked by a path of length $2p+1$ are mapped to distinct vertices of $H_p$ hence $f$ defines a homomorphism $H\rightarrow K_{|H_p|}$, contradicting  $\chi(H)>|H_p|$.
\end{proof}

  \section*{References}
\providecommand{\noopsort}[1]{}\providecommand{\noopsort}[1]{}
\providecommand{\bysame}{\leavevmode\hbox to3em{\hrulefill}\thinspace}
\providecommand{\MR}{\relax\ifhmode\unskip\space\fi MR }
\providecommand{\MRhref}[2]{%
  \href{http://www.ams.org/mathscinet-getitem?mr=#1}{#2}
}
\providecommand{\href}[2]{#2}


\end{document}